\newcommand{\qbinom}[2]{\genfrac{[}{]}{0pt}{}{#1}{#2}}
\newcommand{\bZ}{\mathbb{Z}}
\newcommand{\bQ}{\mathbb{Q}}
\newcommand{\fg}{\mathfrak{g}}
\newcommand{\fsl}{\mathfrak{sl}}
\newcommand{\fso}{\mathfrak{so}}
\newcommand{\fsp}{\mathfrak{sp}}
\newcommand{\SL}{\mathrm{SL}}
\newcommand{\SO}{\mathrm{SO}}
\newcommand{\SU}{\mathrm{SU}}
\newcommand{\Sp}{\mathrm{Sp}}
\newcommand{\End}{\mathrm{End}}
\newtheorem{theorem}{Theorem}[section]
\newtheorem{defn}[theorem]{Definition}
\newtheorem{lemma}[theorem]{Lemma}
\newtheorem{cor}[theorem]{Corollary}
\newtheorem{prop}[theorem]{Proposition}
\newcommand{\incg}[2][.5in]{\setbox5=\hbox{\;\includegraphics[height=#1]{#2}\;}%
\dimen1=-#1\divide\dimen1 by 2\raise\dimen1\box5}
\newcommand{\incgs}[1]{\setbox5=\hbox{\;\includegraphics[height=.25in]{#1}\;}\raise-7pt\box5}
\begin{document}
\title{Recoupling theory for quantum spinors}
\author{Bruce W. Westbury}
\date{2009}

\begin{abstract}This paper extends the Birman-Wenzl category by including
a spin representation and then developing the recoupling theory,
following \cite[Chapter 11]{MR2418111}.
In particular this gives a $q$-analogue of the chromatic evaluation of a
spin network. The recoupling theory is developed up to an evaluation of
the $q$-analogues of the Fierz coefficients.
\end{abstract}
\maketitle
\tableofcontents

\section{Introduction}
At a technical level this paper starts with some simple skein relations and develops further
relations. The relations we start with are the relations \eqref{skein1}, \eqref{skein2}
which are essentially the Kauffman skein relations and then we also introduce the relations
\eqref{tad1}, \eqref{tad2}, \eqref{cliffo}, \eqref{cliffu}. Then we develop the consequences
of these relations in \S \ref{gc} culminating in the calculation of the $q$-analogue of the
Fierz coefficients. My intention has been to ensure that the paper can be read at this level
with no background beyond an understanding of skein relations. This is based on
\cite[Chapter 11]{MR2418111}. Although this book only appeared recently the content was worked
out around thirty years ago when it was ahead of its time.

However the paper is motivated and informed by several other subjects and it seems appropriate
to give some account of these subjects in a comparatively lengthy introduction. The reason for
this is that although each of these subjects is individually well-known I anticipate that there
is a comparitively small audience who will be familiar with all of these topics. These topics
are; creation and annihilation operators from quantum field theory, the representation theory
of the Drinfeld-Jimbo quantised enveloping algebras and spin network evaluations.

This is achieved by introducing the spin representation in this context and developing
the associated recoupling theory. This follows closely \cite[Chapter 11]{MR2418111}
and the other main aim of this paper is to give the $q$-analogue of the material
in this Chapter. This material is in turn based on \cite{MR626119}, \cite{MR677640}, \cite{MR669170}.

\subsection{Notation}
In this paper we will be using diagrams to represent tensors. In particular the labels on the
edges of diagrams are not indices but are used to denote different representations. 
Here we discuss the various
notations for the classical Clifford relations. Let $V$ be a vector space with a symmetric
inner product, $\left\langle-,-\right\rangle$. Then in the mathematics literature the Clifford algebra of $V$, $C(V)$, is defined
to be an algebra $C(V)$ with an inclusion of vector spaces $a\colon V\rightarrow C(V)$ which is
universal with the property that the following Clifford relations hold
\[ a(u)a(v)-a(v)a(u)=\left\langle u,v\right\rangle 1 \]
where $u,v\in V$. This means that a representation of $C(V)$ is a vector space $S$ and a linear
map $\rho\colon V\rightarrow\End(S)$ such that the following Clifford relations hold
\[ \rho(u)\rho(v)-\rho(v)\rho(u)=\left\langle u,v\right\rangle 1 \]
In the physics literature this is written in index notation. Instead of $\rho\colon V\rightarrow\End(S)$
we have $\gamma\colon V\otimes S\rightarrow S$ and this is written in index notation as
$\gamma^{\mu\, a}_b$. Here indices $\mu,\nu,\ldots$ are associated with $S$ and indices $a,b,c,\ldots$
are associated with $V$. Then the Clifford relations in index notation are
\[ \gamma^{\mu\, a}_b\gamma^{\nu\, b}_c+\gamma^{\nu\, a}_b\gamma^{\mu\, b}_c=g^{\mu\,\nu}\delta^a_c \]
where $g^{\mu\,\nu}$ is the metric tensor which gives the inner product. In this paper the vector
space $S$ is represented by a dashed line and the vector space $V$ by a solid line. The tensor
$\gamma$ is represented by a trivalent vertex
\[ \gamma^{\mu\, a}_b = \incg{spinors.156} \]
Then the Clifford relations are written as
\begin{equation*}
\incg{spinors.157}+\incg{spinors.158}=\incg{spinors.159}
\end{equation*}
However the indices convey no information and so are omitted and the Clifford relations
are written as
\begin{equation}\label{cliff}
\incg{spinors.1}+\incg{spinors.2}=\incg{spinors.3}
\end{equation}
This process can be reversed. The procedure for converting a diagram to index notation
consists in the following steps. Assume the diagram has been drawn in general position
with respect to the height function on the page.
Then take the set of vertices to be the trivalent vertices and the critical points.
Take the edges to be the lines connecting these vertices. Put an index $\mu,\nu,\ldots$
on each dashed line (with a different index for each line) and an index $a,b,\ldots$ on
each solid line (also all different). Then for each vertex write the tensor $\gamma$ or $g$
with the indices corresponding to the edges. Then all the internal edges have contracted
indices. In a linear combination of diagrams or in an equation the indices on the boundary
edges are required to be the same for all the diagrams.

Leaving aside aesthetic considerations and personal prejudices there are two reasons for
preferring the diagram notation over the index notation. The first reason is that there is
a choice involved in drawing the diagram in general position. There are (infinitely) many
index expressions which represent the same tensor and which are all represented by the
same diagram. One effect of this is that the proofs in index notation become longer as
each isotopy of diagrams has to be replaced by a sequence of tensors related by standard tensor
identities. The other reason is that the diagram notation is more general. The diagrams in this
paper can only be taken to be tensors if $z=q^a$ for some integer $a$.

\subsection{Coefficients}
On a first reading of this paper the ring of scalars should be taken to be $\bQ(q,z)$, the field
of rational functions in the indeterminates $q$ and $z$. However on subsequent readings the ring
of scalars should be taken to be an integral domain with this field as its field of fractions.
This is analogous to the situation with quantum groups. This are initially defined as Hopf algebras
over the field $\bQ(q)$ and then subsequently one can introduce the integral form which is defined
over $\bZ[q,q^{-1}]$, the ring of Laurent polynomials with integer coefficients.

The elements $[a]\in \bZ[q,q^{-1}]$ are defined for $a\in\bZ$ in the usual way by
\[ [a] = \frac{q^a-q^{-a}}{q-q^{-1}} \]
The $q$-binomial coefficients are also defined in the usual way and satisfy
\begin{equation}\label{qbinom}
\qbinom{a+b+1}{a}=q^a\qbinom{a+b}{a}+q^{-b-1}\qbinom{a+b}{a-1} 
\end{equation} 

The ring $R$ is constructed by taking the ring of Laurent polynomials $\bQ[q,q^{-1}]$
and adjoining elements $1/[a]$ for all $a>1$. 

Next we introduce a second indeterminate $z$ Instead of replacing $\bZ[q,q^{-1}]$ by
$\bZ[q,z,(zq)^{-1}]$ we take a blow-up. This is the approach taken in \cite{MR1312974}.
\begin{defn} The ring $K^\prime$ is the quotient of the
ring $\bZ[q,z,\delta,(qz)^{-1}]$ by the relation
\[ (q-q^{-1})\delta = (z-z^{-1}) \]
\end{defn}

Then we take the Kauffman skein relations with coefficients in $K^\prime$. The advantage
of this approach is that we have a homomorphisms $K^\prime \rightarrow \bZ[q,z,(zq)^{-1},(q-q^{-1})^{-1}]$
and $K^\prime \rightarrow \bZ[\delta]$ given by $q,z\mapsto 1$ and $\delta\mapsto \delta$.
Then under the first specialisation we recover the usual definition of the Birman-Wenzl algebras
and under the second specialisation we recover the Brauer algebras.

Next we extend the definition of the $q$-integers, $[a]$.
\begin{defn} For $a\in\bZ$ define $[n+a]\in K^\prime$ by
\begin{align*} [n+a] &=z[a] + q^{-a}\delta = z^{-1}[a]+q^a\delta \\
[2n+a] &=z[a] + q^{-a}(z+z^{-1})\delta = z^{-1}[a]+q^a(z+z^{-1})\delta 
\end{align*}
\end{defn}
It is straightforward to extend these definitions to define $[bn+a]$ for $b,a\in\bZ$.
However these do not appear in this paper. These satisfy the identity
\[ [A+B][C]=[A][B+C]+[B][A-C] \]
for all $A,B,C$ of the form $bn+a$ with $b,a\in\bZ$.

The ring of scalars in this paper is, $K$, a localisation of $K^\prime$. The elements that are
inverted are $[a]$ for all $a\in\bZ$ and $(zq^{-k}+z^{-1}q)$ for $k\ge 0$. The specialisations that
we are interested in arise from the following homomorphisms. For each $a\in bZ$ we have a
homomorphism $K\rightarrow R$ given by $q\mapsto q$, $z\mapsto q^a$, $\delta\mapsto [a]$.
There is also a homomorphism $K\rightarrow \bQ[\delta]$ given by $q,z\mapsto 1$, $\delta\mapsto\delta$.
Then for each integer $a$ we have a commutative diagram of ring homomorphisms
\[ \begin{CD}
 K @>{z\mapsto q^a}>> R \\
@VVV @VVV \\
\bQ[\delta] @>>{\delta\mapsto a}> \bQ
\end{CD} \]

\subsection{Quantum groups}
The approach in this paper is to avoid the theory of quantum groups. However it will be clear
to experts that this theory informs the results in this paper. Here we give some of the general
results from the theory of quantum groups that are implicit in this paper.

The first result gives the eigenvalues of the braid matrix acting on a tensor product.
This result is from \cite{MR1025154} and further details in the cases at hand are given
in \cite[(2.21) Proposition]{MR1427801}. This result is also stated in
\cite[Proposition 2.3.2]{MR1327533} and in \cite[Proposition 1.2]{MR1929189}.
The result is the following:
\begin{prop} Let $\lambda$, $\mu$, $\nu$ be dominant integral weights of a simple Lie
algebra $\fg$. Let $V(\lambda)$, $V(\mu)$, $V(\nu)$ be the associated irreducible highest
weight representations of $U_q(\fg)$. Assume $V(\nu)$ is a summand of $V(\lambda)\otimes V(\mu)$.
Put $c(\lambda)=(\lambda,\lambda+\rho)$ where $2\rho$ is the sum of the positive roots.
Let $R^{\lambda\, \mu}\colon V(\lambda)\otimes V(\mu)\rightarrow V(\mu)\otimes V(\lambda)$
be the braiding. Then the action of $R^{\lambda\, \mu}R^{\mu\, \lambda}$ on the irreducible components
$V(\nu)$ is scalar multiplication by $q^{c(\lambda)+c(\mu)-c(\nu)}$.
 \end{prop}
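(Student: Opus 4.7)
The plan is to deduce the scalar from the ribbon structure carried by the category of finite-dimensional $U_q(\fg)$-modules. Since the braiding $R^{\lambda\mu}$ is by construction a $U_q(\fg)$-linear natural isomorphism, the composition $R^{\lambda\mu}R^{\mu\lambda}$ is an endomorphism of the $U_q(\fg)$-module $V(\lambda)\otimes V(\mu)$. By Schur's lemma it therefore acts as a scalar on each isotypic component, and only the identification of that scalar remains.

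To pin the scalar down I would invoke the Drinfeld--Reshetikhin ribbon element $\theta$, a central element of (the completion of) $U_q(\fg)$ which acts on each irreducible $V(\lambda)$ by the scalar $q^{c(\lambda)}$ and which is compatible with the coproduct through the identity
\[\Delta(\theta) = (\theta\otimes\theta)(R_{21}R_{12})^{-1}.\]
A short calculation identifies $R^{\lambda\mu}R^{\mu\lambda}$, read as an endomorphism of $V(\lambda)\otimes V(\mu)$, with the action of the central element $R_{21}R_{12}$ on that tensor product. Restricting the displayed identity to the irreducible summand $V(\nu)\hookrightarrow V(\lambda)\otimes V(\mu)$, on which $\theta$ acts by $q^{c(\nu)}$ through $\Delta$, gives
\[R^{\lambda\mu}R^{\mu\lambda}\big|_{V(\nu)} = (\theta_\lambda\theta_\mu)\theta_\nu^{-1} = q^{c(\lambda)+c(\mu)-c(\nu)}\cdot\mathrm{id},\]
as required.

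The main obstacle is bookkeeping rather than mathematics: one must commit to mutually compatible normalizations of the coproduct, of the universal $R$-matrix, and of the invariant form on $\fh^*$ so that the ribbon element acts precisely by $q^{(\lambda,\lambda+\rho)}$ as in the statement, rather than by $q^{(\lambda,\lambda+2\rho)}$, its inverse, or a square root thereof. Once such a choice is fixed, the derivation is essentially Schur's lemma combined with the standard quasi-triangular identities, and the corresponding verifications are recorded in each of the cited references, from which one extracts one consistent convention.
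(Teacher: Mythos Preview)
The paper does not supply its own proof of this proposition; it merely quotes the result and attributes it to Drinfeld \cite{MR1025154}, with pointers to \cite{MR1427801}, \cite{MR1327533}, and \cite{MR1929189} for variants. Your argument via the ribbon element and the identity $\Delta(\theta)=(\theta\otimes\theta)(R_{21}R_{12})^{-1}$ is exactly the standard derivation found in those references, so there is nothing to compare: you have supplied the proof the paper chose to omit, and your caveat about normalisation conventions is well placed.
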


A further result is that the quantum dimension is given by the principal specialisation of
the Weyl character formula, see \cite[Proposition 10.10]{MR823672}. The result is the following:
\begin{prop} Let $\lambda$ be a dominant integral weights of a simple Lie
algebra $\fg$. Let $V(\lambda)$ be the associated irreducible highest
weight representation of $U_q(\fg)$. Then
\[ \dim_q(V(\lambda)=\prod_{\alpha\in \check\Delta_+}
 \frac{[\left\langle \lambda+\rho,\alpha\right\rangle ]}{[\left\langle \rho,\alpha\right\rangle ]}
\]
\end{prop}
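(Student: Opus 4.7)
The plan is to derive the formula as a principal specialisation of the classical Weyl character formula, combined with two applications of the Weyl denominator identity. Since the statement concerns quantum groups, I would first invoke the fact that the formal character of the irreducible $U_q(\fg)$-module $V(\lambda)$, regarded as a sum of weight-space dimensions, coincides with the classical Weyl character
\[
\chi_\lambda \;=\; \frac{\sum_{w\in W}(-1)^{\ell(w)}e^{w(\lambda+\rho)}}{\sum_{w\in W}(-1)^{\ell(w)}e^{w\rho}}.
\]
Thus the whole question reduces to evaluating $\chi_\lambda$ under an appropriate substitution.

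Next, I would identify the quantum dimension with this principal specialisation. Since $\dim_q V(\lambda)$ is defined as $\mathrm{tr}(K_{2\rho}\vert_{V(\lambda)})$, and since $K_{2\rho}$ acts on the weight space $V(\lambda)_\mu$ by $q^{(2\rho,\mu)}$ (with $(\cdot,\cdot)$ the normalised invariant form on $\fh^\ast$), we get
\[
\dim_q V(\lambda) \;=\; \chi_\lambda\bigl(e^\mu \mapsto q^{(2\rho,\mu)}\bigr).
\]

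The core calculation is then a double application of the Weyl denominator identity $\sum_{w}(-1)^{\ell(w)}e^{w\rho} = \prod_{\alpha\in\Delta_+}(e^{\alpha/2}-e^{-\alpha/2})$. Applied with the substitution $e^\mu \mapsto q^{(2\rho,\mu)}$, this turns the denominator of $\chi_\lambda$ into $\prod_{\alpha\in\Delta_+}(q^{(\rho,\alpha)}-q^{-(\rho,\alpha)})$. For the numerator, I would use $W$-invariance of $(\cdot,\cdot)$ to rewrite
\[
\sum_{w}(-1)^{\ell(w)}q^{(2\rho,\,w(\lambda+\rho))} \;=\; \sum_{w}(-1)^{\ell(w)}q^{(w^{-1}(2\rho),\,\lambda+\rho)} \;=\; \sum_{w}(-1)^{\ell(w)}q^{(\lambda+\rho,\,w(2\rho))}
\]
and apply the same denominator identity in the opposite direction, with $e^\mu \mapsto q^{(\lambda+\rho,\mu)}$, obtaining $\prod_{\alpha\in\Delta_+}(q^{(\lambda+\rho,\alpha)}-q^{-(\lambda+\rho,\alpha)})$. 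Dividing numerator by denominator, cancelling the common factor $(q-q^{-1})^{|\Delta_+|}$ and recognising each ratio $(q^n - q^{-n})/(q-q^{-1})$ as $[n]$ yields
\[
\dim_q V(\lambda) \;=\; \prod_{\alpha\in\Delta_+}\frac{[(\lambda+\rho,\alpha)]}{[(\rho,\alpha)]}.
\]
Finally I would rewrite the product over roots as a product over coroots, using $(\mu,\alpha) = \langle\mu,\check\alpha\rangle\cdot(\alpha,\alpha)/2$; the $(\alpha,\alpha)/2$ factors cancel between numerator and denominator (up to the standard rescaling of $q$ for non-simply-laced types), producing the stated form in terms of $\check\Delta_+$.

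The main obstacle is purely bookkeeping: keeping track of the three pairings in play (the invariant form on $\fh^\ast$, the duality $\langle\cdot,\check\alpha\rangle$ with coroots, and the definition of $K_{2\rho}$), and fixing the normalisation of $q$ consistently between simply-laced and non-simply-laced types so that $[\langle\rho,\check\alpha\rangle]$ is the correct expression. Once these conventions are aligned with those used elsewhere in the paper, the proof reduces to the two substitutions in Weyl's denominator identity described above.
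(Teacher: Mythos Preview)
The paper does not actually prove this proposition. It is stated in the introductory section on quantum groups as a known background result, with the attribution ``see \cite[Proposition 10.10]{MR823672}'' (Kac's book). There is therefore no proof in the paper to compare against.

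That said, your sketch is the standard argument and is essentially correct: identify $\dim_q$ with the trace of $K_{2\rho}$, invoke the equality of quantum and classical characters, and then apply the Weyl denominator identity twice (once directly to the denominator, once via $W$-invariance of the form to the numerator). This is exactly the route taken in the cited reference. The only place to be careful is your last paragraph: in non-simply-laced types the factors $(\alpha,\alpha)/2$ do not simply ``cancel'' --- they are absorbed into the parameter, so that the bracket attached to the coroot $\check\alpha$ is computed with $q_\alpha = q^{(\alpha,\alpha)/2}$ rather than with $q$. Whether this matches the paper's notation $[\langle\lambda+\rho,\alpha\rangle]$ literally depends on the convention for $[\,\cdot\,]$, which the paper does not spell out at that level of precision; since the applications later in the paper are all to classical types with a fixed normalisation, this ambiguity never bites.
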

The quantum dimensions of the representations of $\SL(n)$ are given in \cite{MR908150}.
The quantum dimensions of the vector representations of the classical groups are given
in \cite{MR1434112} and \cite{MR1090432} and the quantum dimensions of the spin
representations of the orthogonal groups are given in \cite{MR1929189}.

The $q$-analogue of the spin representation is given explicitly in 
\cite{MR1086739} and \cite[\S 8.4 \& 8.5]{MR1881971}.

\subsection{Yang-Baxter equation}
In this paper we consider four solutions of the Yang-Baxter equation and use these to
construct idempotents. Here we explain the construction. Assume that we have an algebra
$A(n)$ and that for $1\le i\le n-1$ we have $R_i(u)\in A(n)$ and these satisfy the equations
\begin{align} R_i(u)R_j(v)&=R_j(v)R_i(u)\qquad\text{for $|i-j|>1$}\label{R1}\\ R_i(u)R_{i+1}(uv)R_i(v)&=R_{i+1}(v)R_i(uv)R_{i+1}(u) \label{R2}\end{align}
The second equation is known as the Yang-Baxter equation.

Let $e_1,\ldots ,e_n$ be an orthonormal basis of a vector space $V$. Construct a root system
of type $A_{n-1}$ by taking the roots to be the set of vectors
$ \{ e_i-e_j | i\ne j \} $.
Take the positive roots to be 
$ \{ e_i-e_j | i< j \} $
and the simple roots to be 
$ \{ e_i-e_{i+1}  \} $.
The reflection group associated to this root system is the symmetric group.
The generators corresponding to the simple roots are the reflections $s_i$ where
$s_i$ acts on the basis by the transposition $(i,i+1)$. Take the group algebra of the free
abelian group on the basis to be the ring of Laurent polynomials in indeterminates
$q_1,\ldots ,q_k$.

Let $s_{i_1}\ldots s_{i_k}$ be a reduced word in the generators. Then associated to this
word is a sequence of positive roots $\alpha_{1}\ldots \alpha_{k}$. Hence we have an element
in $A(n-1)$ given by $R_{i_1}(q^{\alpha_1})\ldots R_{i_k}(q^{\alpha_k})$. Then the equations
\eqref{R1} and \eqref{R2} imply that this element only depends on the permutation represented
by the reduced word and not on the choice of reduced word.

This can be described using diagrams.
If we draw each generator as a crossing then any word in these generators can be drawn as a diagram.
The word is reduced if any two strings cross at most once. The positive root $\alpha$ associated
to a crossing is $e_j-e_i$.where string $i$ crosses string $j$ at the crossing and strings are
numbered by their starting point

In this paper we specialise to $q_j=q^j$ and we apply this construction to the permutation
$i\leftrightarrow n-i+1$. The properties of the idempotents that we construct follow from
the fact that this word is represented by any reduced word of length $n(n-1)/2$.

The four $R$-matrices we consider were first given in the seminal paper \cite{MR824090}.
The general method for finding solutions of the Yang-Baxter equation is to solve the
Jimbo equations for a finite dimensional representation of the quantised enveloping algebra
of an affine Kac-Moody algebra.

\subsection{Creation and annihilation operators}
A fundamental concept in quantum field theory are the creation and annihilation operators acting on Fock space. The Clifford algebras were introduced in \cite{MR1507084}. The $q$-analogue of the Clifford
algebras and the spin representations and of the Weyl algebra and the oscillator representation were
introduced in \cite{MR1015339} and are studied in \cite{MR1036118}, \cite{MR1327533}, \cite{MR1851791},
\cite{MR1673976}.

As a starting point let $V$ be the vector representation of $\SO(n,n)$. Then the Clifford algebra
of $V$, $C(V)$, is a filtered algebra and the associated graded algebra is $\Lambda^\bullet(V)$,
the exterior algebra of $V$. This is also the decomposition of $C(V)$ as a representation of
$\SO(n,n)$.
Alternatively one starts with a vector space $V$ with a symplectic form, $\omega$,
and defines the Weyl algebra $W(V)$ with the inclusion of vector spaces $a\colon V\rightarrow W(V)$ to be universal with the relation
\[ a(u)a(v)-a(v)a(u)=\omega( u,v) \]
The algebra $W(V)$ is filtered and the associated graded algebra is $S^\bullet(V)$, the symmetric
algebra of $V$. This is also the decomposition of $W(V)$ as a representation of $\Sp(V)$.

The Clifford algebra is simple. Let $S$ be an irreducible representation. The Clifford algebra
is also $\bZ_2$-graded, and so we have $C(V)=C^{\mathrm{even}}(V)\oplus C^{\mathrm{odd}}(V)$.
Then the restriction of $S$ to $C^{\mathrm{even}}(V)$ is the sum of two irreducible representations,
$S=S_+\oplus S_-$. The Lie algebra $\fso(n,n)$ is included in $C^{\mathrm{even}}(V)$ and this induces
a surjective homomorphism $U(\SO(n,n))\rightarrow C^{\mathrm{even}}(V)$. Similarly the Weyl algebra
is simple. Let $M$ be an irreducible representation. Then the Weyl algebra is also $\bZ_2$-graded, and so we have $W(V)=W^{\mathrm{even}}(V)\oplus W^{\mathrm{odd}}(V)$.
Then the restriction of $M$ to $W^{\mathrm{even}}(V)$ is the sum of two irreducible representations,
and these are irreducible representations of $\Sp(V)$.

Now assume $V=U\oplus U^*$. Then we have a natural symmetric inner product on $V$ and also a natural
symplectic form on $V$ each with with $U$ and $U^*$ isotopic. Then the relations for the Clifford algebra
are
\begin{align*}
 a(u)a(v)+a(v)a(u)&=0\\
a(\phi)a(\psi)+a(\psi)a(\phi)&=0\\
a(v)a(\phi)+a(\phi)a(v)&=\phi(v).1
\end{align*}
where $u,v\in U$ and $\phi,\psi\in U^*$.

This algebra acts on the vector space $S=\oplus_{p=0}^n\Lambda^p(U)$. The operators $a(u)$ are
called creation operators and the operators $a(\phi)$ are called annihilation operators.
Similarly, for the symplectic form, the algebra $W(V)$ acts on $M=\oplus_{p=0}^nS^p(U)$.

From the point of view of the representation theory this corresponds to inclusions
$\SL(n)\rightarrow \SO(n,n)$ and $\SL(n)\rightarrow \Sp(n,n)$.

Then for the quantum group version we replace $\fso(n,n)$ by the Drinfeld-Jimbo quantised enveloping
algebra $U_q(D_n)$ and we replace $\fsl(n)$ by $U_q(A_{n-1})$. Then the representations we are considering
are highest weight representations and so correspond to representations of the quantum group.
Then the Clifford algebra $C_q(n)$ is a filtered algebra and the decomposition into irreducible representations of $U_q(D_n)$ is $C_q(V)=\oplus_{k\ge 0} \Lambda^k (V)$. The Clifford algebra is also
$\bZ_2$-graded and we have a surjective algebra homomorphism $U_q(D_n)\rightarrow C_q^{\mathrm even}(n)$.

Alternatively we replace $\fsp(2n)$ by  $U_q(C_n)$ and we replace $\fsl(n)$ by $U_q(A_{n-1})$. Then the representations we are considering are lowest weight representations and so correspond to representations of the quantum group. In this case the lowest weights of $S_{\pm}$ are not integral and these representations have infinite dimension.
Then the Weyl algebra $W_q(n)$ is a filtered algebra and the decomposition into irreducible representations of $U_q(C_n)$ is $W_q(V)=\oplus_{k\ge 0} V(k\omega_1)$. The Weyl algebra is also
$\bZ_2$-graded and we have a surjective algebra homomorphism $U_q(C_n)\rightarrow W^{\mathrm{even}}_q(n)$.
For $n=1$ this is an isomorphism.

This account is based on the split real forms of the Lie algebras. In the physical applications
it is often the case that it is the compact real forms which are relevant. In this case the
relevant inclusions are $\SU(n)\rightarrow \SO(2n)$ and $\SU(n)\rightarrow \mathrm{H}(n)$; here $\mathrm{H}(n)$
is the quaternion unitary group and this is the compact real form of the complexification of
$\Sp(2n)$. This case requires more care as the endomorphism algebra of an irreducible real
representation can be the real numbers, the complex numbers or the quaternions whereas for the
split real form the endomorphism algebra is always the real numbers.

This account is based on the even dimensions. In odd dimensions intermediate between the groups
$\SO(n,n)$ we have the groups $\SO(n,n-1)$. Here there is one spin representation $S$.
For the inclusion $\SO(n,n-1)\rightarrow \SO(n,n)$ the restriction of both $S_+$ and $S_-$
is $S$. For the inclusion $\SO(n-1,n-1)\rightarrow \SO(n,n-1)$ both $S_+$ and $S_-$ restrict to $S$.
The situation is similar for the symplectic groups. Intermediate between the groups $\Sp(2n)$
are the odd symplectic groups $\Sp(2n-1)$. The representation $S$ is known as the oscillator
representation.

The point of view in this paper is that the Clifford algebra $C_q(V)$ can be constructed as an
algebra in the category of representations of $U_q(D_n)$ and the Weyl algebra $W_q$
can be constructed as an algebra in the category of representations of $U_q(C_n)$.
Then we regard $\mathrm{Mat}(\mathsf{B})$ as a category which interpolates between the categories
of representations of the algebras $U_q(D_n)$. Then we can construct an algebra $C_{q,z}$ in this
category which interpolated between the algebras $C_q$. From this point of view there is no difference
between the Clifford algebra and the Weyl algebra. The algebra $C_{q,z}$ is constructed by starting
with
\[ C_{q,z} = \oplus_{a\ge 0} V(a) \]
Then the multiplication is defined using \eqref{cp} and the associativity is given by the $6j$-symbols.
It is clear that this is a filtered and $\bZ_2$-graded algebra. It is also a Frobenius algebra.
From this point of view the aim of this paper is to compute trace maps for products in this algebra.

\subsection{Spin networks}
One of the motivations for this work was to find a $q$-analogue of the chromatic evaluation
of a spin network. In this paper we will change the terminology and refer to spin networks
simply as networks.

\begin{defn} Let $a,b,c\ge 0$. Then $(a,b,c)$ is admissible if $a+b+c$ is even and the following inequalities are satisfied
 \[ a+b\ge c\qquad b+c\ge a \qquad c+a\ge b \]
\end{defn}
The inequalities are known as the triangle inequalities since they are equivalent to the condition
that there is a Euclidean triangle with sides of lengths $(a,b,c)$.
\begin{lemma} The triple $(a,b,c)$ is admissible if and only if there exists $m,n,p\ge 0$
such that
\[ m+n=a\qquad n+p=b\qquad p+m=c \]
\end{lemma}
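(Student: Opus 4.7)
The plan is to prove both directions by solving a small linear system, noting that it is invertible over $\frac{1}{2}\bZ$.

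For the ``if'' direction, suppose $m,n,p\ge 0$ satisfy $m+n=a$, $n+p=b$, $p+m=c$. Adding these, $a+b+c=2(m+n+p)$, which is even. For the triangle inequalities, $a+b-c=m+2n+p-(p+m)=2n\ge 0$, and cyclically $b+c-a=2p\ge 0$ and $c+a-b=2m\ge 0$. Hence $(a,b,c)$ is admissible.

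For the ``only if'' direction, the system
\[ m+n=a,\qquad n+p=b,\qquad p+m=c \]
has the unique solution
\[ m=\tfrac12(a-b+c),\qquad n=\tfrac12(a+b-c),\qquad p=\tfrac12(-a+b+c). \]
The main (and only) thing to check is that these three numbers are non-negative integers. Integrality follows from the assumption that $a+b+c$ is even: since $2m=(a+b+c)-2b$ and $2b$ is even, $2m$ is even, and likewise for $2n$ and $2p$. Non-negativity is exactly the three triangle inequalities $a+c\ge b$, $a+b\ge c$, $b+c\ge a$.

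There is no real obstacle here; the content is simply that the three triangle inequalities plus parity are equivalent to solvability of the system in $\bZ_{\ge 0}$. The argument is elementary linear algebra combined with the observation that the coefficient matrix of the system has determinant $2$, which is precisely why the parity condition on $a+b+c$ is needed.
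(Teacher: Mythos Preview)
Your proof is correct. The paper states this lemma without proof, treating it as an elementary observation; your argument supplies exactly the standard justification via the explicit inverse $m=\tfrac12(a+c-b)$, $n=\tfrac12(a+b-c)$, $p=\tfrac12(b+c-a)$ and the parity check, so there is nothing to compare.
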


\begin{defn} A network is an isotopy class of trivalent graphs embedded in the sphere, $S^2$.
A labelled network is a labelling of the edges by non-negative integers such that for each 
vertex the three edge labels are an admissible triple.
\end{defn}

This definition allows edges labelled $0$. These edges can be omitted without any loss.

Associated to a labelled network is a labelled strand network.
A strand network is a 4-valent graph embedded in the plane together with
a rectangle at each vertex. The edges are called strands. A labelled strand
network is a labelling of the strands by non-negative integers such that
for each rectangle the sums of the two labels on the two opposite sides of the
rectangle are equal.

Given a network, the associated strand network is drawn by taking
the boundary of a thickening of the trivalent graph and then drawing a
solid rectangle for each edge of the trivalent graph. This can also be
considered as the medial graph, see \cite{MR1428870}. If the network is labelled
then the strand network is labelled by replacing each admissible triple $(a,b,c)$
by the corresponding $(m,n,p)$.

Given the labelled strand network $(N,L)$ the Penrose evaluation introduced in
\cite{MR0281657} is defined as follows.
Each strand labelled $a$ is replaced by $a$ parallel lines. A state
assigns a permutation to each rectangle. Let $S$ be a state, let
$|S|$ be the number of closed loops and let $\varepsilon(S)$ be $(-1)^C$
where $C$ is the total number of crossings in the state $S$. Then the evaluation is
\begin{equation}\label{evP} \chi_{-2}(N,L) = \sum_S \varepsilon(S)(-2)^{|S|} \end{equation}
where the sum is over all states $S$.

The chromatic evaluation is an extension of this definition. This can be defined by
replacing \eqref{evP} by
\begin{equation}\label{evCh} \chi_{\delta}(N,L) = \sum_S \varepsilon(S)\delta^{|S|} \end{equation}
where $\delta$ is an indeterminate. This evaluation is a polynomial in $\delta$.
The reason for introducing this evaluation is that if $\delta$ is taken to be a positive integer
then this has a combinatorial interpretation. The chromatic evaluation is studied from this
point of view in \cite[Chapter 8]{MR1280463} and \cite{MR1634471}. It is not practical to 
compute the chromatic evaluation from this definition. The purpose of \cite{MR1634471} is to
give a generating function for the evaluations of a fixed network. This is an effective method
for computing chromatic evaluations.

A different method for defining evaluations is to use tensors. The data here is a vector space
$V(a)$ for each non-negative integer $a$ with a non-degenerate inner product. Then for each
admissible triple $(a,b,c)$ we are given an element of $V(a)\otimes V(b)\otimes V(c)$.
Once this data is specified then we write the labelled network in index notation and convert
to a tensor using this data. This is a totally contracted tensor and is therefore a scalar.
This scalar is the evaluation. The tensors that are specified are required to satisfy some identities.
These are required so that the evaluation is well-defined and does not depend on the way the
labelled network is written as a totally contracted tensor.

The original Penrose evaluation is a tensor evaluation where the vector space $V(a)$ is the
irreducible representation of $\SL(2)$ with highest weight $a$ and dimension $a+1$.
This is also the $a$-th symmetric power of the two dimensional defining representation.
This was the motivation for the Penrose evaluation. Thus when the quantum group $U_q(\SL(2))$
was discovered it was natural to introduce the $q$-analogue of this evaluation. Accounts of
this evaluation are given in \cite[Chapters 1-9]{MR1280463},\cite{MR1366832} and \cite{MR1446615}.

A natural question at this point is to ask if there is a $q$-analogue of the chromatic evaluation.
The key to this is the observation that the chromatic evaluation for $n$ a positive integer
is also a tensor evaluation. I realised this somewhat belatedly on reading \cite[Chapter 11]{MR2418111}.
Here the vector space $V(a)$ is the $a$-th exterior power of the vector representation
of $\SO(k,k)$ for $n=2k$ or of $\SO(k+1,k)$ for $n=2k+1$. This gives $q$-analogues of these
evaluations by replacing $\SO(k,k)$ by the quantised enveloping algebra $U_q(D_k)$ and
by replacing $\SO(k+1,k)$ by the quantised enveloping algebra $U_q(B_k)$. Denote these evaluations
by $\chi^{(q)}_n$. Then a $q$-analogue of the chromatic evaluation is an evaluation with values
in $K$ such that applying the homomorphism $K\rightarrow\bQ[\delta]$ gives the chromatic evaluation
and such that for each positive integer $n$ applying the hommorphism $K\rightarrow R$, $z\mapsto q^n$
gives the evaluation $\chi^{(q)}_n$. An evaluation with these properties is defined in \S \ref{orth}.

\section{Special linear groups}
\subsection{Skein relations}
\begin{defn} The category $\mathsf{H}$ is constructed by taking the free $K$-linear category on the category
of oriented framed tangles and then imposing the following skein relations:
\begin{equation}\label{hecke2}
\incg{spinors.136}=\delta\qquad
\incg{spinors.138}=z\incg{spinors.139}\qquad 
\incg{spinors.137}=z^{-1}\incg{spinors.139}
\end{equation}
\begin{equation}\label{hecke}
 \incg{spinors.43}-\incg{spinors.42}=(q-q^{-1})\incg{spinors.41}
\end{equation} 
This relation can also be written as $\sigma-\sigma^{-1}=q-q^{-1}$.
\end{defn}

For $\SL(n)$ the braid matrices are given in \cite[(17)]{MR629943} and \cite[(3.5)]{MR824090}.
The braid matrices are given in \cite[(3.6)]{MR824090} and \cite[\S 5]{MR1090432}.

\subsection{Yang-Baxter equation}
Next we introduce a solution of the Yang-Baxter equation.
\begin{defn}\label{Ru} Define $R_i(u)$ by
\begin{equation*}
(uq-u^{-1}q^{-1})R_i(u) = u\sigma_i - u^{-1}\sigma_i^{-1}
\end{equation*}
\end{defn}
\begin{prop}
This satisfies the Yang-Baxter equation \eqref{R2}.
It also satisfies unitarity
\[ R_i(u)R_i(u^{-1}) = 1 \qquad R_i(1) = 1 \]
\end{prop}
\begin{proof} This can be checked by a direct calculation. Alternatively,
it is sufficient to check this for each irreducible representation of the
three string Hecke algebra. The dimensions of the irreducible representations are
1, 2, 1. The relation is clear in the one dimensional representations. The two dimensional
representation is given by
\[ \sigma_1^{\pm 1}\mapsto
\begin{pmatrix} q^{\pm 1} & 0 \\ 1 & -q^{\mp 1}\end{pmatrix}
\qquad\sigma_2^{\pm 1}\mapsto
\begin{pmatrix} -q^{\mp 1} & 1 \\ 0 & q^{\pm 1}\end{pmatrix} \]
Hence it is sufficient to check that the following matrices satisfy the Yang-Baxter equation.
\begin{align*} R_1(u) &= \begin{pmatrix}
    uq-u^{-1}q^{-1} & 0 \\ u-u^{-1} & u^{-1}q-uq^{-1}
   \end{pmatrix}\\
R_2(u) &= \begin{pmatrix}
    u^{-1}q-uq^{-1} & u-u^{-1} \\ 0 & uq-u^{-1}q^{-1}
   \end{pmatrix}
 \end{align*}
\end{proof}

Then we define the sequence $F(p)$ by $F(1)=1$ and
\[ F(p+1)=F(p)R_{p}(q^{p})F(p) \]
Then these are idempotents.
The element $F(p)$ is characterised up to a scalar factor by the properties
\begin{equation}\label{idRh}
\sigma_iF(p)=qF(p)=F(p)\sigma_i
\end{equation}
for $1\le i\le p-1$. The element $F(p)$ is characterised uniquely by these properties
together with the property that it is idempotent.

Then we have the recurrence relation
\[ [p+1]\dim_q F(p+1) = [n+p] \dim_q F(p) \]
and the initial conditions $\dim_q F(0)=1$, $\dim_q F(1)=[n]$.
The solution to this recurrence relation is
\[ \dim_q F(p) = \qbinom{n+p-1}{p} \]

Next we introduce a solution of the Yang-Baxter equation.
\begin{defn}\label{Rt} Define $S_i(u)$ by
\begin{equation*}
(uq-u^{-1}q^{-1})S_i(u) = u^{-1}\sigma_i - u\sigma_i^{-1}
\end{equation*}
\end{defn}
\begin{prop}
This satisfies the Yang-Baxter equation \eqref{R2}.
It also satisfies unitarity
\[ S_i(u)S_i(u^{-1}) = 1 \qquad S_i(1) = 1 \]
\end{prop}
\begin{proof} This can be checked by the same methods as for the $R$-matrix in Definition
\ref{Ru}. Alternatively it follows by noting that there is an involution which interchanges
these two $R$-matrices.\end{proof}
Then we define the sequence $E(p)$ by $E(1)=1$ and
\[ E(p+1)=E(p)S_{p}(q^{p})E(p) \]
Then these are idempotents.
The element $E(p)$ is characterised up to a scalar factor by the properties
\begin{equation}\label{idSh}
\sigma_iE(p)=-q^{-1}E(p)=E(p)\sigma_i
\end{equation}
for $1\le i\le p-1$. The element $E(p)$ is characterised uniquely by these properties
together with the property that it is idempotent.

Then we have the recurrence relation
\[ [p+1]\dim_q E(p+1) = [n-p] \dim_q E(p) \]
and the initial conditions $\dim_q E(0)=1$, $\dim_q E(1)=[n]$.
The solution to this recurrence relation is
\[ \dim_q E(p) = \qbinom{n}{p} \]

Using this we define the exterior powers and the two types of trivalent vertices.
These satisfy the relations in \cite[Lemma A.1]{MR1659228}.

\subsection{Clifford relations}
Then we have the following relations which are consistent with \eqref{hecke}.
\begin{align*}
 q^{-1}\incg{spinors.44}+\incg{spinors.46}&=0\\
 q\incg{spinors.44}+\incg{spinors.45}&=0 \\
 q^{-1}\incg{spinors.47}+\incg{spinors.48}&=0\\
 q\incg{spinors.47}+\incg{spinors.49}&=0 
\end{align*}
The following relations which are consistent with \eqref{hecke} and with taking traces.
\begin{align*}
 \incg{spinors.50}+\incg{spinors.56}&=z^{-1}q^{a}\incg{spinors.53}\\
 \incg{spinors.50}+\incg{spinors.55}&=zq^{-a}\incg{spinors.53}\\
 \incg{spinors.54}+\incg{spinors.52}&=q^{-a}\incg{spinors.57}\\
 \incg{spinors.54}+\incg{spinors.51}&=q^{a}\incg{spinors.57}\\
\end{align*}
These are the $q$-analogue of the canonical anticommutation relations.

For the first equation the upper trace condition is equivalent to the identity
\[ [a]+z^{-1}[n-a]=z^{-1}q^a[n] \]
For the first equation the lower trace condition is equivalent to the binomial identity
\[ \qbinom{n-1}{a-1}+z^{\pm 1}\qbinom{n-1}{a} = z^{-1}q^a\qbinom{n}{a} \]

\section{Orthogonal groups}\label{orth}
\subsection{Skein relations}
\begin{defn}
The category $\mathsf{B}$ is constructed by taking the free $K$-linear category on the category
of unoriented framed tangles and then imposing the following skein relations:
\begin{equation}\label{skein1}
\incg{spinors.10}=(zq^{-1}+z^{-1}q)\delta\qquad
\incg{spinors.12}=z^{-2}q\incg{spinors.13}\qquad 
\incg{spinors.11}=z^2q^{-1}\incg{spinors.13}
\end{equation}

\begin{equation}\label{skein2}
\incg{spinors.7}-\incg{spinors.6}=(q-q^{-1})
\left(\incg{spinors.8}-\incg{spinors.9}\right)
\end{equation}
These equation are also written as
\[ \sigma_iu_i=z^{-2}qu_i\qquad \sigma_i^{-1}u_i= z^{2}q^{-1}u_i \]
\[ \sigma_i - \sigma_i^{-1} = (q-q^{-1})(1-u_i) \]
\end{defn}

For $\SO(n,n)$ we have
\begin{equation}
 u = \sum_{i,j}q^{(i+j)/2} E_{ij}\otimes E_{-i-j}
\end{equation}
and the braid matrices are
\begin{align}
 \sigma &= \sum_i\left( qE_{ii}\otimes E_{ii} + q^{-1}E_{i-i}\otimes E_{-ii}\right)
+\sum_{i\ne j,-j} E_{ij}\otimes E_{ji} \\
&\qquad +(q-q^{-1})\sum_{i<j} E_{ii}\otimes E_{jj} 
 -(q-q^{-1})\sum_{j<-i} q^{(i+j)/2}E_{ij}\otimes E_{-i-j} \nonumber
\end{align}
\begin{align}
 \sigma^{-1} &= \sum_i\left( q^{-1}E_{ii}\otimes E_{ii} - qE_{-ii}\otimes E_{i-i}\right) 
 +\sum_{i\ne j,-j} E_{ij}\otimes E_{ji} \\
&\qquad -(q-q^{-1})\sum_{i>j} E_{ii}\otimes E_{jj} 
 +(q-q^{-1})\sum_{j>-i} q^{(i+j)/2}E_{ij}\otimes E_{-i-j} \nonumber
\end{align}
Then these satisfy
\[ \sigma - \sigma^{-1} = (q-q^{-1})(1-u) \]

\subsection{Yang-Baxter equation}
Next we introduce a solution of the Yang-Baxter equation. These solutions are associated with the quantised enveloping algebras of the Kac-Moody algebras $D_n^{(1)}$.
\begin{defn}\label{RmatrixD} Define $R_i(u)$ by
\begin{align*}
 (uzq^{-1}-u^{-1}&z^{-1}q)(uq-u^{-1}q^{-1})R_i(u) = \\
&(u-u^{-1})(uzq^{-1}\sigma_i-u^{-1}z^{-1}q\sigma_i^{-1}) +(zq^{-1}-z^{-1}q)(q-q^{-1}) \\
=&(uzq^{-1}-u^{-1}z^{-1}q)(u\sigma_i-u^{-1}\sigma_i^{-1})+(zq^{-1}-z^{-1}q)(q-q^{-1})u_i
\end{align*}
\end{defn}
\begin{prop}
This satisfies the Yang-Baxter equation \eqref{R2}.
It also satisfies unitarity
\[ R_i(u)R_i(u^{-1}) = 1 \qquad R_i(1) = 1 \]
\end{prop}
\begin{proof} This can be checked by a direct calculation. Alternatively, it is sufficient
to check this for each irreducible representation of the three string Birman-Wenzl algebra.
If we impose the relation $u_i=0$ then we obtain the $R$-matrix in Definition \ref{Ru}.
Hence the Yang-Baxter equation is satisfied in all the representations with $u_i=0$.
This leaves the following three dimensional representation
\[ \sigma_1^{\pm 1}\mapsto\begin{pmatrix}
z^{\mp 2}q^{\pm 1} & 0 & 0 \\ -z^{\pm 1}(zq^{-2}+z^{-1}q^2) & -q^{\mp 1} & 0 \\
q^{\mp 1} & 1 & q^{\pm 1}
\end{pmatrix} \]
\[ \sigma_2^{\pm 1}\mapsto\begin{pmatrix}
q^{\pm 1} & 1 & q^{\mp 1} \\ 0 & -q^{\mp 1} & -z^{\pm 1}(zq^{-2}+z^{-1}q^2)\\
0 & 0 & z^{\mp 2}q^{\pm 1}
\end{pmatrix} \]
Hence it is sufficient to check that the Yang-Baxter equation holds in this representation.
\end{proof}

It also has crossing symmetry. If we rotate diagrams through a quarter of a revolution
we have $1 \leftrightarrow u_i$ and $\sigma_i\leftrightarrow\sigma_i^{-1}$. Then crossing 
symmetry says that we also have
\begin{multline}
 (uzq^{-1}-u^{-1}z^{-1}q)(uq-u^{-1}q^{-1})R_i(u)\leftrightarrow \\
 (u-u^{-1})(uzq^{-2}-u^{-1}z^{-1}q^{-2} )R_i(u^{-1}z^{-1}q)
\end{multline}
Furthermore if we take the quotient $u_i=0$ we get the $R$-matrix in Definition \ref{Ru}.

The reason we have introduced this is that we can define a sequence of idempotents by
\begin{equation}
 F(1)=1 \quad F(p+1)=F(p)R_{p}(q^{p})F(p)
\end{equation}
These idempotents project onto the representation $V(p\omega_1)$.
In this paper we will not make use of these idempotents.
The element $F(p)$ is characterised up to a scalar factor by the properties
\begin{equation}\label{idR}
u_iF(p)=0=F(p)u_i\qquad
\sigma_iF(p)=qF(p)=F(p)\sigma_i
\end{equation}
for $1\le i\le p-1$. The element $F(p)$ is characterised uniquely by these properties
together with the property that it is idempotent.

\begin{lemma}\label{lemR} For $p\ge 2$ and $1\le i\le p-1$
 \begin{equation*}
  R_i(u)F(p)=F(p)=F(p)R_i(u)
 \end{equation*}
\end{lemma}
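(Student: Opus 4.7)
The plan is to derive the lemma as a direct algebraic consequence of the characterizing properties \eqref{idR} of $F(p)$ together with the second form of the defining equation for $R_i(u)$ in Definition \ref{RmatrixD}.

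First I would record the implications of \eqref{idR}. Since $\sigma_i F(p)=qF(p)$ and $\sigma_i \sigma_i^{-1}=1$, composing gives $\sigma_i^{-1}F(p)=q^{-1}F(p)$, and similarly on the other side. Combined with $u_iF(p)=0=F(p)u_i$, this means that multiplication on the right (resp.\ left) by $F(p)$ sends $\sigma_i$, $\sigma_i^{-1}$, $u_i$ to the scalars $q$, $q^{-1}$, $0$ times $F(p)$.

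Next I would take the second expression for $R_i(u)$ in Definition \ref{RmatrixD},
\[
(uzq^{-1}-u^{-1}z^{-1}q)(uq-u^{-1}q^{-1})R_i(u) = (uzq^{-1}-u^{-1}z^{-1}q)(u\sigma_i-u^{-1}\sigma_i^{-1})+(zq^{-1}-z^{-1}q)(q-q^{-1})u_i,
\]
and multiply on the right by $F(p)$. The $u_i$ term kills, and the bracket $(u\sigma_i-u^{-1}\sigma_i^{-1})F(p)$ collapses to $(uq-u^{-1}q^{-1})F(p)$, so the right-hand side becomes
\[
(uzq^{-1}-u^{-1}z^{-1}q)(uq-u^{-1}q^{-1})F(p).
\]
Cancelling the common Laurent-polynomial factor in $u$ on both sides yields $R_i(u)F(p)=F(p)$. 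The argument for $F(p)=F(p)R_i(u)$ is identical, using $F(p)\sigma_i = qF(p)$ and $F(p)u_i=0$.

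The one point requiring care is the cancellation: the scalar $(uzq^{-1}-u^{-1}z^{-1}q)(uq-u^{-1}q^{-1})$ is invertible once one works over a ring of scalars in which $u$ is a formal parameter (for instance the extension of $K$ by $u^{\pm 1}$ localized at these factors, which is the natural home for the identity $R_i(u)$ is defined by). Since the equality holds after multiplying by a nonzero Laurent polynomial in $u$ and the ambient algebra has no $u$-torsion, we may divide through. This is the only subtle step, and it is really a matter of interpretation rather than a calculation; the substantive content of the lemma is that the eigenvalue relations in \eqref{idR} match exactly the coefficients of $\sigma_i$ and $\sigma_i^{-1}$ in the defining formula for $R_i(u)$, which is by design.
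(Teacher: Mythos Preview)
Your argument is correct and is exactly the ``direct calculation from Definition \ref{RmatrixD} using \eqref{idR}'' that the paper has in mind: substitute the eigenvalue relations $\sigma_i F(p)=qF(p)$, $\sigma_i^{-1}F(p)=q^{-1}F(p)$, $u_iF(p)=0$ into the defining formula for $R_i(u)$ and cancel the common scalar. There is nothing to add.
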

\begin{proof}
 This is a direct calculation from Definition \ref{RmatrixD}
using \eqref{idR}.
\end{proof}
These idempotents are also given in \cite{MR1751618}.

As an excercise we show that the quantum trace of the idempotent $F(p)$ gives the quantum
dimensions of the representations $V(p\omega_1)$. This gives that for $p\ge 1$,
\begin{align}
 [n+p-1][p+1]\dim_q F(p+1) &= \left(
[n+p-1][2n+p-1]+[n-1]
\right) \dim_q F(p) \\
&=[2n+p-2][n+p]\dim_q F(p)
\end{align}
The initial condition is
\[ \dim_q F(1) = \frac{[2n-2]}{[n-1]}[n] \]

This quantum dimension can also be calculated as the principal specialisation of the Weyl character formula. This gives
\begin{equation}
 \dim_q V(p\omega_1) = \frac{[n+p-1]}{[n-1]}\qbinom{2n+p-3}{p}
\end{equation}
Hence
\begin{equation}
 \dim_q V((p+1)\omega_1) = \frac{[2n+p][n+p]}{[n+p-1][p+1]}
 \dim_q V(p\omega_1)
\end{equation}

Next we introduce a further solution of the Yang-Baxter equation. These solutions are associated with the quantised enveloping algebras of the Kac-Moody algebras $A_{2n}^{(2)}$.
\begin{defn}\label{RmatrixA} Define $S_i(u)$ by
\begin{align*}
 (uz^{-1}+u^{-1}&z)(uq-u^{-1}q^{-1})S_i(u)  \\
=&(u^{-1}-u)(u^{-1}z\sigma_i+uz^{-1}\sigma_i^{-1}) +(z+z^{-1})(q-q^{-1}) \\
=&(uz^{-1}+u^{-1}z)(u^{-1}\sigma_i-u\sigma_i^{-1})+(z+z^{-1})(q-q^{-1})u_i
\end{align*}
\end{defn}
\begin{prop}
This satisfies the Yang-Baxter equation \eqref{R2}.
It also satisfies unitarity
\[ S_i(u)S_i(u^{-1}) = 1 \qquad S_i(1) = 1 \]
\end{prop}
\begin{proof} This is a direct calculation.\end{proof}

It also has crossing symmetry. If we rotate diagrams through a quarter of a revolution
we have $1 \leftrightarrow u_i$ and $\sigma_i\leftrightarrow\sigma_i^{-1}$. Then crossing 
symmetry says that we also have
\begin{equation}
 (uz^{-1}+u^{-1}z)(uq-u^{-1}q^{-1})S_i(u)\leftrightarrow 
 (u-u^{-1})(uz^{-1}q^{-1}+u^{-1}zq )S_i(\imath u^{-1}z)
\end{equation}
Furthermore if we take the quotient $u_i=0$ we get the $R$-matrix in Definition \ref{Rt}.

The reason we have introduced this is that we can define a sequence of idempotents by
\begin{equation}
 E(1)=1 \quad E(p+1)=E(p)S_{p}(q^{p})E(p)
\end{equation}
These idempotents project onto the exterior powers of the vector representation.
The element $E(p)$ is characterised up to a scalar factor by the properties
\begin{equation}\label{idS}
u_iE(p)=0=E(p)u_i\qquad
\sigma_iE(p)=-q^{-1}E(p)=E(p)\sigma_i
\end{equation}
for $1\le i\le p-1$. The element $E(p)$ is characterised uniquely by these properties
together with the property that it is idempotent.

\begin{lemma}\label{lemS} For $p\ge 2$ and $1\le i\le p-1$
 \begin{equation*}
  S_i(u)E(p)=E(p)=E(p)S_i(u)
 \end{equation*}
\end{lemma}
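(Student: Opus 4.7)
The plan is to mirror, almost verbatim, the proof of Lemma \ref{lemR}: reduce the claim to a direct calculation using the defining formula of $S_i(u)$ from Definition \ref{RmatrixA} together with the characterizing identities \eqref{idS} for $E(p)$.

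Concretely, for $p \geq 2$ and $1 \leq i \leq p-1$, I would take the third form of the defining equation,
\[ (uz^{-1}+u^{-1}z)(uq-u^{-1}q^{-1})S_i(u) = (uz^{-1}+u^{-1}z)(u^{-1}\sigma_i - u\sigma_i^{-1}) + (z+z^{-1})(q-q^{-1})u_i, \]
and multiply on the right by $E(p)$. By \eqref{idS} the summand $u_i E(p)$ vanishes, while $\sigma_i E(p) = -q^{-1} E(p)$ and therefore $\sigma_i^{-1} E(p) = -q\,E(p)$ (since $\sigma_i$ acts on $E(p)$ as the invertible scalar $-q^{-1}$). Plugging these in collapses the right-hand side to
\[ (uz^{-1}+u^{-1}z)\bigl(-u^{-1}q^{-1} + uq\bigr) E(p) = (uz^{-1}+u^{-1}z)(uq - u^{-1}q^{-1}) E(p), \]
which matches the scalar prefactor of $S_i(u) E(p)$ on the left. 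Cancelling this common factor (legitimate in the same formal sense in which $S_i(u)$ itself is defined in Definition \ref{RmatrixA}) yields $S_i(u) E(p) = E(p)$.

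The identity $E(p) S_i(u) = E(p)$ follows by the symmetric calculation: multiply the defining equation by $E(p)$ on the left and use the right-handed versions of \eqref{idS}. There is no genuine obstacle here; the only bookkeeping concern is the cancellation of the scalar prefactor $(uz^{-1}+u^{-1}z)(uq-u^{-1}q^{-1})$, but this is exactly the same move used implicitly in Definition \ref{RmatrixA} and in the proof of Lemma \ref{lemR}, and it is valid either by working with the formal indeterminate $u$ or by localizing at the prefactor. Thus the lemma reduces to a routine substitution, which is all that Lemma \ref{lemR} required in its parallel setting.
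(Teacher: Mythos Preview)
Your proposal is correct and follows essentially the same approach as the paper: the paper's proof simply states that this is a direct calculation from Definition \ref{RmatrixA} using the characterizing identities for $E(p)$ (the reference to \eqref{idR} there is evidently a typo for \eqref{idS}), and you have spelled out precisely that calculation.
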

\begin{proof}
 This is a direct calculation from Definition \ref{RmatrixA}
using \eqref{idR}.
\end{proof}
These idempotents are also given in \cite{MR1751618}.

As an excercise we show that the quantum trace of the idempotent $E(p)$ gives the quantum
dimensions of the exterior powers of the vector representation, $V(\omega_p)$. This gives
that for $p\ge 1$,
\begin{align*}
 \frac{[2n-2p]}{[n-p]}&[p+1]\dim_q E(p+1)\\
 &= \left(
\frac{[2n-2p]}{[n-p]}[2n-p-1]+\frac{[2n]}{[n]}
\right) \dim_q E(p) \\
&= \frac{[2n-2p-2]}{[n-p-1]}[2n-p]\dim_q E(p)
\end{align*}
The initial condition is
\[ \dim_q E(1) = \frac{[2n-2]}{[n-1]}[n] \]

This quantum dimension can also be calculated as the principal specialisation of the Weyl character formula. This gives
\begin{align}
 \dim_q V(\omega_p) &= \frac{[2n-2p]}{[n-p]}\frac{[n]}{[1]}
\prod_{k=1}^{p-1}\frac{[2n-k]}{[k+1]} \\
&= \left( \frac{[2n-2][n]}{[n-1][2n]}\right) \qbinom{2n}{p}
\end{align}
Hence
\begin{equation}
 \dim_q V(\omega_{p+1}) = \frac{[2n-2p-2][2n-p][n-p]}{[2n-2p][n-p-1][p+1]}
 \dim_q V(\omega_p)
\end{equation}

\begin{prop}\label{dimq} For $a\ge 0$.
 \begin{equation*}
  \incg[1in]{spinors.150}=\left( \frac{[2n-2][n]}{[n-1][2n]}\right) \qbinom{2n}{a}
 \end{equation*}
\end{prop}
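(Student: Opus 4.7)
The plan is to interpret the diagram on the left-hand side as the Markov (quantum) trace of the idempotent $E(a)$ constructed in the previous subsection, and then verify the claimed closed form by induction on $a$, using exactly the recurrence that has already been displayed above the proposition.

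First I would identify the LHS with $\dim_q E(a)$. By the construction of the $E(p)$'s and the definition of the category $\mathsf{B}$, a strand labelled $a$ encodes insertion of the idempotent $E(a)$ projecting onto the $a$-th exterior power, so the diagram $\incg[1in]{spinors.150}$ is by definition the quantum trace of $E(a)$. Thus the content of the proposition is the evaluation $\dim_q E(a)=\left(\tfrac{[2n-2][n]}{[n-1][2n]}\right)\qbinom{2n}{a}$, and the goal is to prove this diagrammatically, without appealing to the Weyl character formula.

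Next I would establish the dimension recurrence
\[
\frac{[2n-2a]}{[n-a]}[a+1]\dim_q E(a+1)=\frac{[2n-2a-2]}{[n-a-1]}[2n-a]\dim_q E(a).
\]
This comes from the recursive formula $E(a+1)=E(a)S_a(q^a)E(a)$ by closing the top strand and computing the partial trace. One expands $S_a(q^a)$ using Definition \ref{RmatrixA} as a linear combination of $1$, $\sigma_a$ and $u_a$, pushes each term through $E(a)$ using the eigenvalue identities \eqref{idS} together with Lemma \ref{lemS}, and evaluates the resulting traces via the skein relations \eqref{skein1}, \eqref{skein2}. The base case is the first-loop evaluation $\dim_q E(1)=\tfrac{[2n-2]}{[n-1]}[n]$, read directly from the unknot.

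Finally I would check that the proposed closed form satisfies both the initial value and the recurrence. Substituting $\dim_q E(a)=\left(\tfrac{[2n-2][n]}{[n-1][2n]}\right)\qbinom{2n}{a}$ into the recurrence reduces the claim to the single $q$-binomial identity
\[
[2n-a-1][a+1]\qbinom{2n}{a+1}=[2n-a][2n-a-1]\qbinom{2n}{a},
\]
or equivalently a consequence of \eqref{qbinom} combined with the definitions of $[n\pm k]$ and $[2n\pm k]$ in $K^\prime$; this is a purely formal manipulation in the ring $K$.

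The main obstacle is the middle step: honestly deriving the dimension recurrence from $E(a+1)=E(a)S_a(q^a)E(a)$, because it requires simultaneously using the skein values, the absorption properties of $E(a)$ under $\sigma_a$ and $u_a$, and the explicit form of the $R$-matrix from Definition \ref{RmatrixA}. Once that recurrence is in hand, the remainder is a routine $q$-identity verification matching the initial conditions.
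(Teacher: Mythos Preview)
Your overall plan is exactly the argument the paper gives: the discussion immediately preceding the proposition derives the recurrence
\[
\frac{[2n-2p]}{[n-p]}\,[p+1]\,\dim_q E(p+1)
=\frac{[2n-2p-2]}{[n-p-1]}\,[2n-p]\,\dim_q E(p)
\]
by closing one strand of $E(p+1)=E(p)S_p(q^p)E(p)$ and using Definition~\ref{RmatrixA} together with the loop and curl values in \eqref{skein1}, records the initial value $\dim_q E(1)=\tfrac{[2n-2]}{[n-1]}[n]$, and then states the closed form.  The proposition itself carries no additional proof beyond that preceding paragraph, so your outline matches the paper step for step.

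There is, however, a real gap in your final step, and it is not the harmless ``purely formal manipulation'' you describe.  Substituting $\dim_q E(a)=\tfrac{[2n-2][n]}{[n-1][2n]}\qbinom{2n}{a}$ into the recurrence above does \emph{not} reduce to the identity you wrote.  What one actually gets (writing $\{k\}=\tfrac{[2n-2k]}{[n-k]}$) is
\[
\{a\}\,[a+1]\,\qbinom{2n}{a+1}\;=\;\{a+1\}\,[2n-a]\,\qbinom{2n}{a},
\]
and since the elementary relation $[a+1]\qbinom{2n}{a+1}=[2n-a]\qbinom{2n}{a}$ already holds, this would force $\{a\}=\{a+1\}$, which is false in $K$.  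Your displayed identity has the same factor $[2n-a-1]$ on both sides rather than the distinct factors $\{a\}$ and $\{a+1\}$, so it is not the correct reduction.  In short, the closed form stated in the proposition does not satisfy the recurrence you (correctly) derive; the expression that does is the paper's first formula $\dim_q V(\omega_p)=\tfrac{[2n-2p]}{[n-p]}\tfrac{[n]}{[1]}\prod_{k=1}^{p-1}\tfrac{[2n-k]}{[k+1]}$, which differs from the stated $q$-binomial form by the factor $\{p\}/\{1\}$.  So the step you flag as the ``main obstacle'' (deriving the recurrence) is fine; the step you dismiss as routine is where the argument actually fails, and the discrepancy is already present in the two displayed formulas just before the proposition.
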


The quantum dimensions for partitions are derived from the Weyl character formula in
\cite[Theorem 5.5]{MR1090432} and \cite{MR1434112}.

\subsection{Braiding}
First we deal with loops following \cite[\S 5.2 Proposition 6]{MR1280463}.
\begin{lemma}
 \begin{equation*}
 \incg{spinors.104}=z^{2a}q^{-a^2}\incg{spinors.105}
\end{equation*}
\end{lemma}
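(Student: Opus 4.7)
The plan is to cable the thickened strand into $a$ parallel strands, weighted at top and bottom by the idempotent $E(a)$, since the edge labelled $a$ in the diagram represents the exterior-power projector onto $V(\omega_a)$. The positive framing twist on the single strand then becomes the standard cabled ribbon twist on $a$ parallel strands, and the claim is that this cabled twist evaluates to $z^{2a}q^{-a^2}$ after absorbing $E(a)$.

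Concretely, I would decompose the cabled twist into two kinds of elementary pieces: (i) one self-curl on each of the $a$ individual strands, and (ii) for every pair $i<j$ of distinct strands, a pair of crossings where strand $i$ passes over strand $j$ twice. There are $\binom{a}{2}$ pairs of the second kind. This decomposition is the standard isotopy that writes a full ribbon twist of a cable as a product of individual self-curls and a ``full crossing'' permuting the strands back to their original order; it is Proposition 6 of \cite[\S 5.2]{MR1280463} cited in the statement.

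I now evaluate the two types of pieces. Each self-curl is handled by the skein relation~\eqref{skein1}, which gives a scalar factor of $z^2 q^{-1}$ (choosing the convention matching the sign of the twist in the picture). Each double crossing between distinct strands is $\sigma_i^2$, and because it is wedged between copies of $E(a)$, the idempotent identity~\eqref{idS}, namely $\sigma_iE(a)=-q^{-1}E(a)$, gives $\sigma_i^2 E(a)=q^{-2}E(a)$. Multiplying these contributions together yields
\[
(z^2q^{-1})^a \cdot (q^{-2})^{\binom{a}{2}} \;=\; z^{2a}\,q^{-a-a(a-1)} \;=\; z^{2a}q^{-a^2},
\]
which is the claimed scalar.

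The only non-routine point is justifying the decomposition of the cabled twist into $a$ self-curls and $\binom{a}{2}$ double crossings, together with the fact that the resulting ``full crossing'' on the cable can actually be written, after absorbing $E(a)$ on both sides and using Lemma~\ref{lemS}, as a product of the $\sigma_i^2$ in any convenient order. Once that bookkeeping is set up, the evaluation is an immediate application of~\eqref{skein1} and~\eqref{idS}; everything else is isotopy.
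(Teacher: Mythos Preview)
Your proof is correct and follows essentially the same approach as the paper's. The paper argues by induction on the label, splitting the $a$-cable into two sub-bundles and picking up a factor $q^{-2ab}$ from the crossings between them (together with the inductive values of the two smaller twists); you simply unroll this induction all the way down to single strands, so that the cross-term becomes $(q^{-2})^{\binom{a}{2}}$ and the self-curls contribute $(z^{2}q^{-1})^{a}$. Both arguments rest on exactly the same two inputs, \eqref{skein1} for the individual curls and \eqref{idS} for the action of the crossings on $E(a)$. Your appeal to Lemma~\ref{lemS} is not actually needed---\eqref{idS} alone shows every $\sigma_i^{\pm1}$ acts as a scalar on $E(a)$, so the full-twist braid acts by the scalar determined by its writhe.
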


\begin{proof}
 The proof is by induction on $a$. The basis of the induction is the case $a=1$
which is given in \eqref{skein1}. The inductive step is the following calculation.

\begin{equation*}
 \incg{spinors.106}=\incg{spinors.107}
=(z^{2a}q^{-a^2})(z^{2b}q^{-b^2})q^{-2ab}\incg{spinors.108}
\end{equation*}
\end{proof}

\begin{lemma}\label{twist}
 \begin{equation*}
 \incg{spinors.109}=(-1)^{st+rs+rt}z^{-2s}q^{(s+t)(s+r)-2rt}\incg{spinors.110}
\end{equation*}
\end{lemma}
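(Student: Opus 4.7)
My plan is to interpret the labels $r$, $s$, $t$ as the sizes of the three parallel-strand channels corresponding to the $(m,n,p)$-parameters of the admissible triples at the vertices involved, so that each labelled strand in the left-hand diagram of the statement is really a bundle of $r$, $s$, or $t$ elementary strands with the antisymmetrizer $E(r)$, $E(s)$, or $E(t)$ inserted. Then the identity reduces to a direct calculation in the Birman--Wenzl skein by resolving each bundle into unit strands and applying the absorbing properties from Lemma~\ref{lemS}.

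First I would insert $E(r)$, $E(s)$, $E(t)$ along the three bundles. By \eqref{idS} together with Lemma~\ref{lemS}, every elementary crossing within or across an $E$-bundle absorbs as $-q^{-1}$, so a crossing between bundles of sizes $p$ and $p'$ contributes $(-q^{-1})^{pp'}$. Accumulated over the three pairs of channels this already accounts for the sign $(-1)^{rs+st+rt}$ and a uniform $q$-power $q^{-(rs+st+rt)}$. Next I would evaluate the self-twists of each bundle: the preceding lemma shows that a full positive twist on a bundle of $k$ elementary strands produces the scalar $z^{2k}q^{-k^2}$, and the reversed twist is its inverse (via \eqref{skein1}), so the three channels contribute a monomial in $z$ and $q$ whose exponents are combinations of $r^2$, $s^2$, $t^2$ and $r$, $s$, $t$ with signs determined by the local orientation of each twist in the isotopy that transforms the two diagrams into each other.

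Finally I would collect all factors. The isotopy turning the left diagram into the right one twists the three channels asymmetrically, so the $z$-contributions from two of the channels cancel in pairs while the third survives as $z^{-2s}$; the combined $q$-exponent from the self-twists, from the $(-q^{-1})^{pp'}$ factors above, and from the framing corrections needed to straighten the vertices should collapse algebraically to the stated $(s+t)(s+r)-2rt$. The main obstacle is precisely this last bookkeeping step: one has to read off from the isotopy which of the three channel self-twists are positive and which negative, and then verify by a direct exponent calculation that the total $z$- and $q$-powers agree with the asymmetric formula in the statement. Once the signs and orientations are pinned down the remaining verification is a routine purely additive manipulation of exponents that does not invoke any of the $q$-integer identities.
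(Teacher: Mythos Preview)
Your outline is the same calculation the paper has in mind: the paper's proof is literally ``This is a straightforward calculation,'' and the ingredients you list---the absorbing property \eqref{idS} giving $\sigma_i E=-q^{-1}E$ (hence $\sigma_i^{-1}E=-qE$) for crossings that can be slid against the relevant antisymmetrizer, together with the preceding loop lemma for the curl on a bundle---are exactly what that calculation uses. Two small corrections: you do not need Lemma~\ref{lemS} (that concerns the $R$-matrix $S_i(u)$, not the braiding $\sigma_i$); and the phrase ``every elementary crossing within or across an $E$-bundle absorbs as $-q^{-1}$'' is slightly too loose, since a crossing between an $r$-strand and an $s$-strand absorbs only after it is isotoped to sit against the $E(r+s)$ idempotent on the corresponding leg, and whether it absorbs as $-q^{-1}$ or $-q$ depends on whether it is a positive or negative crossing. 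Once you keep track of that distinction and of which bundle carries the curl, the exponent bookkeeping you describe goes through and yields the stated monomial.
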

\begin{proof} This is a straightforward calculation.
\end{proof}

\subsection{Restriction}
There is an inclusion of $\SL(n)$ in $\SO(n,n)$. This extends to the quantised enveloping algebras
and so gives a restriction functor from the representation theory of $SO(n,n)$ to the representation
theory of $\SL(n)$.

Next we want to construct the restriction functor. For this we need the category
$\mathrm{Mat}(\mathsf{H})$ where $\mathsf{H}$ is the Hecke category. This construction
is discussed in \cite[Chapter 2]{scottthesis}. The objects of $\mathrm{Mat}(\mathsf{H})$
are ordered lists of objects of $\mathsf{H}$ and morphisms are matrices whose entries
are morphisms in $\mathsf{H}$.

Then we start with
\begin{equation}
 \incg{spinors.58}\mapsto \incg{spinors.59}\oplus \incg{spinors.60}
\end{equation}
This extends to
\begin{equation}
 \incg{spinors.80}\mapsto \bigoplus_{r+s=p}\incg{spinors.81}
\end{equation}
Then we define a linear order for the tensor product by
\begin{equation}
 \incgs{spinors.8}\mapsto \incgs{spinors.61}\oplus \incgs{spinors.62}
\oplus \incgs{spinors.63}\oplus \incgs{spinors.64}
\end{equation}

Then the Birman-Wenzl category $\mathsf{B}$ is finitely generated as a monoidal category.
Thus a monoidal functor $\mathsf{B}\rightarrow\mathrm{Mat}(\mathsf{H})$ is uniquely determined
by the values on the generators. The functor we are interested in is determined by the following.
First we have the braid group generators and their inverses
\begin{equation}
 \incg{spinors.7}\mapsto\begin{pmatrix}
\incgs{spinors.65} & 0 & 0 & 0 \\
0 & (q-q^{-1})\left(\incgs{spinors.62}-z^{-1}q\incgs{spinors.83}\right) & \incgs{spinors.66} & 0 \\
0 & \incgs{spinors.67} & 0 & 0 \\
0 & 0 & 0 & \incgs{spinors.68}
\end{pmatrix}\end{equation}
\begin{equation}
 \incg{spinors.6}\mapsto\begin{pmatrix}
\incgs{spinors.160} & 0 & 0 & 0 \\
0 & 0 & \incgs{spinors.161} & 0 \\
0 & \incgs{spinors.162} & (q-q^{-1})\left(-\incgs{spinors.63}+zq^{-1}\incgs{spinors.82}\right) & 0 \\
0 & 0 & 0 & \incgs{spinors.163}
\end{pmatrix}\end{equation}
Each of these is obtained from the other by switching crossings, reversing directions
and applying the involution $q\leftrightarrow q^{-1}$, $z\leftrightarrow z^{-1}$.

Then we have the maximum and minimum.
\begin{equation}
 \incgs{spinors.69}\mapsto\begin{pmatrix}
0 & z^{-1}q\incgs{spinors.71} & \incgs{spinors.70} & 0
\end{pmatrix}\end{equation}

\begin{equation}
 \incgs{spinors.72}\mapsto\begin{pmatrix}
0 & \incgs{spinors.73} & zq^{-1}\incgs{spinors.74} & 0
\end{pmatrix}^{\mathrm{T}}\end{equation}

These imply that
\begin{equation}
 \incg{spinors.9}\mapsto\begin{pmatrix}
0 & 0 & 0 & 0 \\
0 & z^{-1}q\incgs{spinors.83} & \incgs{spinors.84} & 0 \\
0 & \incgs{spinors.85} & zq^{-1}\incgs{spinors.82} & 0 \\
0 & 0 & 0 & 0
\end{pmatrix}\end{equation}

Then it is clear that the tangle relations are satisfied so this gives a spherical braided functor
$\mathsf{T}\rightarrow\mathrm{Mat}(\mathsf{H})$. Then it can be checked that the skein relations
\eqref{skein1} and \eqref{skein2} are satisfied so this gives a $K$-linear spherical braided functor
$\mathsf{B}\rightarrow\mathrm{Mat}(\mathsf{H})$.

\section{Spin representation}\label{gc}
In this section we introduce the $q$-analogue of the Clifford relations and develop some consequences.
The spin representation, $S$, is denoted by a new type of edge which in this paper is a dashed line.
Then we also introduce a new trivalent vertex corresponding to the linear map $V\otimes S\rightarrow S$
which is also known as Clifford multiplication and in the physics literature is refered to as a $\gamma$-matrix. Later we will also introduce trivalent vertices for the linear maps
$\Lambda^k(V)\otimes S\rightarrow S$ which are known in the physics literature as $\Gamma$-matrices.
We will refer to networks with these trivalent vertices as spinor networks.

The evaluation of a spinor network is defined either by using the restriction functor or
alternatively by using Proposition \ref{exp}. Neither of these definitions is practical.


Next we introduction the notation $\{k\}$ defined by
\[ \{k\} =zq^{-k}+z^{-1}q^k=\frac{[2n-2k]}{[n-k]} \]

\subsection{Clifford relations}
Then introducing the spin representation we assume the relations:
\begin{equation}\label{tad1}
 \incg{spinors.14}=\Delta\qquad\incg{spinors.19}=0
\end{equation}
\begin{equation}\label{tad2}
 \incg{spinors.15}=\delta\incg{spinors.16}\qquad
 \incg{spinors.17}=\frac{\Delta}{(zq^{-1}+z^{-1}q)}\incg{spinors.18}
\end{equation}
These relations hold for both orientations of the spinor line.

The $q$-analogue of the Clifford relation \eqref{cliff} are the two relations
\begin{equation}\label{cliffo}
q^{-1}\incg{spinors.1}+\incg{spinors.5}=z^{-1}
\incg{spinors.3}
\end{equation}

\begin{equation}\label{cliffu}
q\incg{spinors.1}+\incg{spinors.4}=z\incg{spinors.3}
\end{equation}

Note that 
\begin{align*}
 (q+q^{-1})F_i(2) &= q^{-1}+\sigma_i-\frac{1}{z\delta}u_i\\
&= q+\sigma_i^{-1}-\frac{z}{\delta}u_i
\end{align*}
so that these two relations are equivalent to
\begin{equation*}
 \incg{spinors.135}=0
\end{equation*}

Now to extend the restriction functor to the spin representation we start with
\begin{equation}
 \incg{spinors.75}\mapsto \bigoplus_{p\ge 0} \incg{spinors.76}
\end{equation}

Then we extend this by
\begin{equation}
 \incg{spinors.77}\mapsto \bigoplus_{p\ge 0}\left( q^{p}\incg{spinors.78}\oplus \incg{spinors.79}\right)
\end{equation}
\begin{equation}
 \incg{spinors.86}\mapsto \bigoplus_{p\ge 0}\left( q^{-p}\incg{spinors.87}\oplus \incg{spinors.88}\right)
\end{equation}
These coefficients are determined by the first equation in \eqref{tad2}.
Then we check that the canonical anticommutation relations imply the Clifford relations
\eqref{cliffo} and \eqref{cliffu}.

This gives an evaluation as we can restrict and then evaluate using \cite{MR1659228}.
\subsection{Expansion}
The following is a consequence of the Clifford relations
\begin{lemma} For all $p\ge 1$
\begin{equation}
 \incg{spinors.20}=\sum_{k=0}^{p-1}(-q)^kz
\incg{spinors.21}+(-q)^p\incg{spinors.22}
\end{equation}
\end{lemma}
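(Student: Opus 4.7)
The plan is to proceed by induction on $p$, peeling off one vector-line crossing at a time by means of the Clifford relation \eqref{cliffu}.

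The base case $p=1$ is a direct rewriting of \eqref{cliffu}. That relation can be read as ``over-crossing $= z \cdot (\text{cap-cup turnaround}) - q \cdot (\text{under-crossing})$''; identifying the diagram spinors.20 with the over-crossing, the $k=0$ summand spinors.21 with the cap-cup turnaround, and spinors.22 with the under-crossing, the asserted formula with $p=1$ is precisely \eqref{cliffu} after solving for the over-crossing.

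For the inductive step from $p$ to $p+1$, I would apply \eqref{cliffu} to the outermost of the $p+1$ crossings in spinors.20. This produces a $+z$ term in which the spinor turns around at the outermost vector line, contributing the $k=0$ summand of the right-hand side, plus a $-q$ term in which the outermost crossing has been switched to an under-crossing while the remaining $p$ crossings are still over-crossings. The latter contains an inner block of $p$ over-crossings to which the induction hypothesis applies; its expansion produces $p$ turnaround terms together with a fully under-crossed spinor strand. Reintroducing the outermost under-crossing and the prefactor $-q$ re-indexes these contributions as the summands $(-q)^{k} z \cdot (\text{turnaround at position } k)$ for $k=1,\dots,p$, together with $(-q)^{p+1}$ times the fully under-crossed spinor at level $p+1$. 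Combining with the $k=0$ term from the first branch gives exactly the claimed formula.

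The main obstacle is bookkeeping rather than conceptual content: the single glyph spinors.21 in the statement represents a family of diagrams parametrized by the turnaround position $k$, and one must check that absorbing the outermost under-crossing past the shifted inner turnaround coming from the induction hypothesis is a planar isotopy carrying one diagram of this family to the next. Once this convention on spinors.21 is fixed, no skein relation beyond \eqref{cliffu} is required and the induction is entirely mechanical.
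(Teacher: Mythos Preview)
Your proposal is correct and follows exactly the approach in the paper: induction on $p$ with the base case $p=1$ being \eqref{cliffu} and the inductive step obtained by applying \eqref{cliffu} to one crossing and then invoking the inductive hypothesis. In fact, the paper's own proof consists of just the single sentence ``The proof is by induction on $p$. The basis of the induction is the case $p=1$ which is the Clifford relation \eqref{cliffu}.''
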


\begin{proof} The proof is by induction on $p$. The basis of the induction is the 
case $p=1$ which is the Clifford relation \eqref{cliffu}.
\end{proof}

Next we take the trace of this identity.
\begin{lemma} For all $p\ge 1$
\begin{equation}
 \incg{spinors.23}=z^{2}q^{-1}\incg{spinors.24}
\end{equation}
\end{lemma}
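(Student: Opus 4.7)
The plan is to close up the free spinor endpoints of the identity from the preceding lemma---this is what ``taking the trace'' means in this context. Under this closure the LHS becomes \incg{spinors.23} directly, so the entire task is to show that the traced RHS collapses to $z^{2}q^{-1}$ times \incg{spinors.24}.

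First I would examine each summand $(-q)^{k} z \cdot \incg{spinors.21}$ for $0\le k\le p-1$. After tracing, each one should present a spinor loop bearing an intermediate vector-strand structure. I expect the second relation of \eqref{tad1} to annihilate most of these: any traced summand containing a single vector edge attached to a spinor loop vanishes. For the remainder, I would invoke \eqref{tad2} to replace internal cup--cap pieces by their scalars and \eqref{skein1} to evaluate any closed vector loops. The final term $(-q)^{p}\cdot \incg{spinors.22}$ should trace cleanly into a multiple of \incg{spinors.24} using the value $\Delta$ of the closed spinor loop.

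The main obstacle is the coefficient bookkeeping. I would have to show that the surviving contributions---the powers of $-q$ from the sum, the factors of $z$ from \eqref{tad2}, the loop value $(zq^{-1}+z^{-1}q)\delta$ from \eqref{skein1}, and the constant $\Delta/(zq^{-1}+z^{-1}q)$ from the second equation of \eqref{tad2}---conspire to give exactly $z^{2}q^{-1}$. I would expect the identity $\{1\}=zq^{-1}+z^{-1}q$ and possibly the $q$-binomial identity \eqref{qbinom} to be needed in this final algebraic collapse. If the intermediate summands do not simply vanish outright, I would fall back on induction on $p$: assume the claim for $p-1$, combine the Clifford relation \eqref{cliffu} (applied at the closing end) with the inductive hypothesis, and verify that the two expressions for the traced spinors.20 agree.
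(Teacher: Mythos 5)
There is a genuine gap, and it starts with a misidentification of what the lemma asserts. This lemma is \emph{not} the trace of the preceding expansion lemma; it is an independent, purely topological statement comparing two closed diagrams. In the paper the left-hand diagram is isotopic to the right-hand one except that sliding the vector strand around the closed spinor loop creates a single kink in the solid (vector) line, and the kink is removed at the cost of the framing coefficient $z^{2}q^{-1}$ from the third relation in \eqref{skein1}. That is the whole proof: one isotopy plus one curl relation. No Clifford relations, no tadpole relations \eqref{tad1}--\eqref{tad2}, no loop value $(zq^{-1}+z^{-1}q)\delta$, and no induction enter at all; the coefficient $z^{2}q^{-1}$ is simply the ribbon twist of the vector strand, not the outcome of any bookkeeping among $\Delta$, $\delta$ and $\{1\}$.

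Your proposed route would in fact fail on its own terms. If one traces the expansion identity, the intermediate summands (the traced versions of the contraction terms) do \emph{not} vanish: their vector legs are attached to the rest of the network, so the tadpole relation in \eqref{tad1} does not apply, and indeed these are exactly the nonzero terms that survive into Proposition \ref{exp} with coefficients $(-q)^{k}$. If they vanished as you expect, Proposition \ref{exp} would collapse and the whole recursion defining the evaluation would be lost. The correct logical order is the reverse of what you propose: the present lemma (closure picks up $z^{2}q^{-1}$) and the expansion lemma are proved separately, and only then combined --- solving a linear equation in the closed diagram --- to obtain Proposition \ref{exp}. So the fix is to abandon the trace-and-cancel plan and instead argue by isotopy: close the diagram, slide the distinguished vector edge around the spinor loop, and apply the curl relation $z^{2}q^{-1}$ from \eqref{skein1}.
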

\begin{proof}
 This follows from the following isotopy together with \eqref{skein1}.
\[ \incg{spinors.23}=\incg{spinors.151}=\incg{spinors.152}\]
\end{proof}

Then it follows easily from these two lemmas that:
\begin{prop}\label{exp} For all $p\ge 0$
\begin{equation}\label{expq}
 \incg{spinors.24}=\left(\frac{z}{z^{2}q^{-1}-(-q)^{p+1}}\right)
\sum_{k=0}^{p}(-q)^k \incg{spinors.25}
\end{equation}
\end{prop}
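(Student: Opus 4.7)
The two preceding lemmas are set up precisely to combine into this proposition. The strategy is to apply the expansion lemma, close both sides to form traces, and then use the second lemma to express the resulting ``remainder'' term in terms of the original trace diagram; this produces a linear equation that can be solved for the desired quantity.

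In detail, I would first apply the expansion lemma with $p$ replaced by $p+1$. This expresses the open diagram spinors.20 as $z\sum_{k=0}^{p}(-q)^{k}$ times the corresponding shortened diagrams spinors.21 (one for each $k$), plus a remainder term of the form $(-q)^{p+1}$ times the end diagram spinors.22. I would then close this identity by joining the free spinor ends through a loop, so that the open diagram on the left becomes the trace diagram spinors.23 appearing on the left-hand side of the second lemma, each summand closes into the corresponding spinors.25 in \eqref{expq}, and the remainder closes back into an additional copy of spinors.24: the extra strand inserted by the expansion becomes the closing loop.

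Next I would invoke the second preceding lemma to replace spinors.23 by $z^{2}q^{-1}$ times spinors.24. Writing $L$ for spinors.24 and $R_{k}$ for spinors.25, the closed expansion then reads
\[
z^{2}q^{-1}\,L \;=\; z\sum_{k=0}^{p}(-q)^{k}R_{k} \;+\; (-q)^{p+1}L.
\]
Collecting the two occurrences of $L$ yields $(z^{2}q^{-1}-(-q)^{p+1})\,L \;=\; z\sum_{k=0}^{p}(-q)^{k}R_{k}$, and dividing by the factor $z^{2}q^{-1}-(-q)^{p+1}$, which is a unit in the ring of scalars $K$, produces exactly \eqref{expq}.

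The one step that requires genuine care is the closure bookkeeping: I would need to verify that closing spinors.20 really produces the diagram spinors.23 appearing in the second lemma, and that closing spinors.22 really produces an undecorated copy of spinors.24, with no spurious framing factor or extra crossing introduced by the closure. These identifications should be immediate from the pictures, but they are exactly the sort of step at which a stray sign or a factor of $q^{\pm 1}$ can slip in, so I would draw the closures out explicitly before performing the final algebraic rearrangement.
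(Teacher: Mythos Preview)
Your proposal is correct and is exactly the argument the paper has in mind: the paper simply states that the proposition ``follows easily from these two lemmas'' without writing out the details, and what you have written is the natural unpacking of that remark---close the expansion identity to a trace, use the curl lemma to turn the left-hand side into $z^{2}q^{-1}$ times the target diagram, and solve the resulting linear relation. Your caution about the closure bookkeeping is well placed but no subtlety arises: closing spinors.20 introduces a single curl (giving spinors.23), closing spinors.22 gives spinors.24 on the nose, and closing each spinors.21 gives the corresponding spinors.25.
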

Note that the case $p=0$ is the third relation in \eqref{tad2}.

The significance of this Proposition is that it defines the evaluation
of a spinor network.

\begin{prop} Every spinor network has an evaluation.
\end{prop}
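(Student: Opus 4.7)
The plan is to eliminate the dashed (spinor) part of any spinor network by inductive application of Proposition \ref{exp}, reducing to a linear combination of pure networks in $\mathsf{B}$, where evaluations already exist.

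Since a spinor network is embedded in $S^2$ and every dashed edge either is closed or terminates at a Clifford $\gamma$-vertex, the dashed subgraph decomposes as a disjoint union of closed spinor loops $L_1,\dots,L_m$, where each $L_i$ carries some number $n_i\ge 0$ of attached vector edges via $\gamma$-vertices. I would set up an induction on the pair $(m,\sum n_i)$ under lexicographic order.

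For the base of the reduction, consider a loop $L_i$ with $n_i=0$: the first equation of \eqref{tad1} (together with the second equation, which kills untwisted unattached spinor arcs) replaces this isolated dashed circle by the scalar $\Delta$, decreasing $m$. For the inductive step, take a loop $L_i$ with $n_i\ge 1$. Working in $S^2$, I would use isotopy to collect all $n_i$ attached $\gamma$-vertices along a single arc of $L_i$, so that a neighbourhood of that arc is precisely the configuration appearing on the left-hand side of \eqref{expq} with $p=n_i-1$. Applying Proposition \ref{exp} replaces this neighbourhood by a linear combination of diagrams in which the dashed arc is converted to a simple cup and the $n_i$ vector edges are paired off by caps on the vector line; in particular the spinor loop $L_i$ disappears, strictly decreasing $m$. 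Iterating, the dashed edges are entirely removed in finitely many steps.

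What remains is a linear combination of networks containing only solid (vector) edges, i.e., closed diagrams in the Birman--Wenzl category $\mathsf{B}$. Evaluations of such diagrams exist by the definition of $\mathsf{B}$ via the skein relations \eqref{skein1}, \eqref{skein2}, or equivalently by the spherical braided functor $\mathsf{B}\to\mathrm{Mat}(\mathsf{H})$ constructed in the previous subsection. Combining linearly produces the scalar evaluation of the original spinor network.

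The main obstacle is the bookkeeping in the inductive step: verifying that after isotopy on $S^2$ each spinor loop really does present itself in the form required by \eqref{expq}, and that no new dashed complexity is introduced when the proposition is applied. Well-definedness of the resulting scalar (independence of the order in which loops are processed and of the particular isotopy chosen) is not needed for existence, but in any case follows from the alternative description via the restriction functor mentioned at the start of \S\ref{gc}, which furnishes a scalar directly from any diagrammatic presentation.
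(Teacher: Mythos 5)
Your overall strategy is the paper's own: eliminate the dashed part by iterating Proposition \ref{exp} and finish with the skein relations in $\mathsf{B}$; but as written there are concrete gaps. First, you only treat $\gamma$-vertices, whereas a spinor network may carry $\Gamma$-vertices $\Lambda^k(V)\otimes S\to S$ and solid edges labelled $a>1$; Proposition \ref{exp} is a statement about strands labelled one, so the paper's first step --- which you omit --- is to expand every idempotent so that the evaluation becomes a linear combination of spinor networks in which all edges are labelled one. Second, you misstate what Proposition \ref{exp} does: a single application does not turn the dashed loop into ``a simple cup'' with all vector legs paired off. Each term on the right-hand side of \eqref{expq} still contains the spinor loop, with fewer $\gamma$-vertices on it (one leg has been contracted against another); the loop only disappears after repeated applications, the terminal step being \eqref{tad1}. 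This is precisely why the paper inducts on the number of edges, and why the corollary on traces of an odd number of $\gamma$-matrices needs \eqref{tad1} as the base of an induction rather than following from one application of \eqref{expq}. Your lexicographic induction on $(m,\sum n_i)$ does survive once the step is corrected (the second coordinate drops while $m$ is unchanged, until a bare loop is removed by \eqref{tad1}), but the inductive step as you describe it is false.

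Third, well-definedness is not an optional extra: ``has an evaluation'' means the scalar produced is independent of the order of the reductions and of the isotopy chosen, and this is the actual content the paper supplies, via the remark that for every positive integer $n$ the procedure is well defined after the specialisation $z\mapsto q^n$, and agreement under infinitely many such specialisations pins down the element of $K$. Your fallback to the restriction functor does not by itself settle this over $K$: that functor sends the spinor line to the infinite direct sum $\bigoplus_{p\ge 0}$ of exterior powers, and the resulting evaluation (via \cite{MR1659228}) is only available at the specialisations $z=q^n$, so one still needs the specialisation argument to conclude anything about the generic coefficient ring.
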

\begin{proof}First expand each idempotent so the evaluation is written as a linear
combination of spinor networks in which all edges are labelled one. Then evaluate this
by induction on the number of edges. The inductive step is an application of Proposition \ref{exp}.

This is well-defined since for each $n$ it is well-defined under the specialisation $z\mapsto q^n$.
\end{proof}

An immediate corollary is
\begin{cor} For all $a\ge 1$
 \begin{equation}
 \incg{spinors.27}=0
\end{equation}
\end{cor}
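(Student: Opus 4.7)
The corollary asserts the vanishing of a closed spinor loop with a single solid edge of label $a\ge 1$ attached, i.e.\ the $q$-analogue of the classical fact that $\mathrm{tr}(\Gamma^{(a)})=0$ for $a\ge 1$. The plan is to reduce it directly to Proposition \ref{exp}, whose role is precisely to evaluate spinor networks by peeling off one Clifford vertex at a time.

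First, I would interpret the rank-$a$ trivalent vertex appearing in \incg{spinors.27} by unpacking it through the idempotent $E(a)$: the $\Gamma$-matrix vertex equals $a$ parallel Clifford $\gamma$-vertices composed with $E(a)$ on the $a$ solid strands. Under this identification, the diagram \incg{spinors.27} becomes the diagram \incg{spinors.24} (with $p+1=a$) post-composed with $E(a)$.

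Second, I would apply \eqref{expq} of Proposition \ref{exp} to rewrite this as
\[
\left(\frac{z}{z^{2}q^{-1}-(-q)^{a}}\right)\sum_{k=0}^{a-1}(-q)^{k}\,\bigl(\incg{spinors.25}\bigr)\cdot E(a).
\]
For each summand, the configuration of \incg{spinors.25} leaves one solid strand passing through the spinor loop in a way that, after composition with $E(a)$, either produces a cap-cup $u_i$ inside the antisymmetrizer (annihilated by $u_iE(a)=0$ from \eqref{idS}) or leaves behind an isolated single-edge spinor tadpole \incg{spinors.19}, which is zero by \eqref{tad1}. Either way, every term in the sum is zero.

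A cleaner alternative, which avoids explicit manipulation of $E(a)$, is induction on $a$: the base case $a=1$ is exactly the second relation of \eqref{tad1}, and the inductive step applies \eqref{cliffu} to strip one Clifford vertex from the spinor loop, expressing \incg{spinors.27} as a linear combination of the $(a-1)$-labelled diagram (zero by the inductive hypothesis) and a diagram that visibly contains the $a=1$ tadpole \incg{spinors.19} as a subdiagram (zero by \eqref{tad1}). The main obstacle in either route is the coefficient and sign bookkeeping when the $\Gamma$-matrix vertex is decomposed through the idempotent, or when \eqref{cliffu} is applied termwise; the combinatorial core is transparent, but care is needed to verify that no summand survives.
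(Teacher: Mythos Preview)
Your first route is precisely what the paper has in mind: it offers no proof beyond the phrase ``An immediate corollary is'', and the intended mechanism is exactly to feed the expansion \eqref{expq} through the antisymmetriser $E(a)$ so that every summand dies by $u_iE(a)=0$ from \eqref{idS}. Two small clarifications. For $a\ge 2$ every term of \eqref{expq} carries a cap joining two \emph{distinct} solid strands; after absorbing any intervening crossings via $\sigma_iE(a)=-q^{-1}E(a)$ from \eqref{idS}, each term reduces uniformly to $u_iE(a)=0$, so the ``isolated tadpole'' alternative you list does not actually occur in this branch of the argument. The case $a=1$ is simply the second relation of \eqref{tad1}, which is also the base of your inductive alternative. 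With these remarks your argument is complete and coincides with the paper's.
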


A further corollary is
\begin{cor} For all $p\ge 1$ such that $p$ is odd
\begin{equation}
 \incg{spinors.26}=0
\end{equation}
\end{cor}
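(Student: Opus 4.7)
The underlying reason this network should vanish for odd $p$ is the $\bZ_2$-grading on the spin representation: the label-$p$ projector restricts to the $p$-th exterior power, whose Clifford action has $\bZ_2$-degree $p \bmod 2$, and the trace of an odd-degree operator on $S=S_+\oplus S_-$ vanishes because it interchanges the two summands. The plan is to reproduce this phenomenon purely at the level of the skein relations, without appealing to the restriction functor.

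First I would apply Proposition \ref{exp} to the label-$p$ solid edge in the diagram \texttt{spinors.26}, rewriting it as
\[
\frac{z}{z^{2}q^{-1}-(-q)^{p+1}}\sum_{k=0}^{p}(-q)^{k}\,N_{k},
\]
where each $N_{k}$ is a simpler network in which only label-$1$ edges meet the spin loop. Using the Clifford relations \eqref{cliffo} and \eqref{cliffu} together with \eqref{tad1} and \eqref{tad2}, each $N_{k}$ reduces to a scalar $c(k)$ times the bare spin loop $\Delta$. The statement then becomes the scalar identity $\sum_{k=0}^{p}(-q)^{k}c(k)=0$ for $p$ odd. I expect $c$ to satisfy the antisymmetry $c(k)=-c(p-k)$, which arises from reversing the orientation of the spin line: this swap interchanges the two Clifford relations (they differ by $z\leftrightarrow z^{-1}$ together with the sign that distinguishes $q^{-1}\incg{spinors.1}$ from $q\incg{spinors.1}$), and so toggles even and odd powers of $-q$. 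Pairing $k$ with $p-k$ in the sum then forces cancellation whenever $p$ is odd.

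The main obstacle will be pinning down the antisymmetry $c(k)=-c(p-k)$ cleanly: one must track how the two Clifford relations are exchanged when the spin line is read in the opposite sense, and confirm that this interchange multiplies a reduced $N_k$ by precisely the sign $(-1)^{p}$ relative to its partner. A more pedestrian fallback is induction on $p$, using $p=1$ as the base case (which is the second equation in \eqref{tad1}, $\incg{spinors.19}=0$) and absorbing one label-$1$ strand at a time into the label-$p$ edge during the inductive step, so that the $p$-case collapses to the $(p-2)$-case with the help of the previous corollary ($\incg{spinors.27}=0$ for $a\ge 1$).
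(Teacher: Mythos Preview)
Your ``pedestrian fallback'' is exactly the paper's proof: it argues by induction on odd $p$, with the base case $p=1$ supplied by the tadpole relation in \eqref{tad1} and the inductive step by Proposition~\ref{exp}, each application of which lowers the number of $\gamma$-insertions on the spinor loop by two. The paper does not invoke the previous corollary about \texttt{spinors.27} explicitly, but otherwise your fallback and the paper's argument coincide.

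Your primary strategy is different and, as you yourself flag, incomplete. After a single application of Proposition~\ref{exp} the summands $N_k$ are not yet scalar multiples of $\Delta$: each is again a spinor trace with two fewer $\gamma$-insertions (the two removed solid legs now joined by an arc), so reducing all the way to scalars requires iterating the expansion --- and that is just the induction rephrased. More seriously, the antisymmetry $c(k)=-c(p-k)$ you want to harvest from reversing the dashed line is not among the relations the paper establishes; the two Clifford relations \eqref{cliffo} and \eqref{cliffu} are interchanged by the bar involution $q\leftrightarrow q^{-1}$, $z\leftrightarrow z^{-1}$ together with a swap of over- and under-crossings, not by flipping the spinor orientation. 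Proving the orientation-reversal symmetry you need would itself require an argument at least as long as the induction. Go with your fallback.
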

\begin{proof} The proof is by induction using \eqref{tad1} as the basis of the induction
 and Proposition \ref{exp} for the inductive step.
\end{proof}

The usual way of stating this result is to say that the trace of a product of an odd number of
$\gamma$-matrices is zero. In view of this we are only interested in Proposition in the case 
that $p$ is even. In this case we can write \eqref{expq} as
\begin{equation}\label{expz}
 \incg{spinors.24}=\frac{q^{-p/2}}{zq^{-p/2-1}+z^{-1}q^{p/2+1}}
\sum_{k=0}^{p}(-q)^k \incg{spinors.25}
\end{equation}

\subsection{First evaluations}
Here we give a $q$-analogue of \cite[\S 11.1 (11.5)]{MR2418111}.
This is also the case $a=1$ of \cite[\S 11.2 (11.19)]{MR2418111}.

\begin{lemma}\label{lem1}
 \begin{multline*}
  \incg{spinors.89}=
\incg{spinors.1}- \\
\frac{(u-u^{-1})}{(uq-u^{-1}q^{-1})}\frac{(u+u^{-1})}{(uz^{-1}+u^{-1}z)}\incg{spinors.3}
 \end{multline*}
\end{lemma}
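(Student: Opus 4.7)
The strategy is to substitute the definition of $R_i(u)$ and reduce everything to two basic diagrams: the $\gamma\gamma$-composition $\incg{spinors.1}$ and the diagram $\incg{spinors.3}$ representing the metric on the two vector strands tensored with the identity on the spin line.

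Concretely, I would expand the $R$-matrix appearing in $\incg{spinors.89}$ using Definition \ref{RmatrixD},
\[
R_i(u) = \frac{u\sigma_i - u^{-1}\sigma_i^{-1}}{uq-u^{-1}q^{-1}} + \frac{(zq^{-1}-z^{-1}q)(q-q^{-1})}{(uq-u^{-1}q^{-1})(uzq^{-1}-u^{-1}z^{-1}q)}\,u_i.
\]
When composed with the $\gamma\gamma$-pattern on the spin line, this decomposes the left-hand side into contributions from $\sigma_i\cdot(\gamma\gamma)$, $\sigma_i^{-1}\cdot(\gamma\gamma)$, and $u_i\cdot(\gamma\gamma)$. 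The two Clifford relations \eqref{cliffo} and \eqref{cliffu} rewrite the first two as linear combinations of $\incg{spinors.1}$ and $\incg{spinors.3}$ with coefficients in $\{q^{\pm 1},z^{\pm 1}\}$, and the second tadpole relation in \eqref{tad2} rewrites the third as $\frac{\Delta}{zq^{-1}+z^{-1}q}\incg{spinors.3}$. Collecting terms, the coefficient of $\incg{spinors.1}$ should simplify — via the antisymmetric pairing $u\sigma_i-u^{-1}\sigma_i^{-1}$ of the two Clifford relations against the denominator $uq-u^{-1}q^{-1}$ — to reproduce the leading $\incg{spinors.1}$ on the right-hand side.

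The main obstacle is showing that the remaining coefficient of $\incg{spinors.3}$ collapses to the stated product
\[
-\frac{(u-u^{-1})(u+u^{-1})}{(uq-u^{-1}q^{-1})(uz^{-1}+u^{-1}z)}.
\]
This coefficient is a sum of three rational functions in $u$, $q$, $z$ and $\Delta$, and the simplification is nontrivial because the denominator arising from Definition \ref{RmatrixD} is $(uzq^{-1}-u^{-1}z^{-1}q)$, not the target $(uz^{-1}+u^{-1}z)$. The key algebraic inputs are the identification $\Delta = \delta(zq^{-1}+z^{-1}q)$, which is forced by consistency of the tadpole relation with the two Clifford relations (this is precisely what makes the two expressions for $(q+q^{-1})F_i(2)$ in the note preceding this lemma agree), together with the blow-up relation $(q-q^{-1})\delta = (z-z^{-1})$. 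With these substitutions, the three rational functions combine into a single fraction that factors to yield the desired closed form.

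I would verify this last identity by clearing all denominators to reduce it to a polynomial identity in $u$ of bounded Laurent degree, and then check it either by direct expansion or by specializing $u$ to a sufficient number of values — for instance $u=1$, where both sides trivially agree since $R_i(1)=1$ and the prefactor on the right vanishes, together with enough additional generic points to pin down the remaining polynomial.
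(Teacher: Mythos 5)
You have started from the wrong $R$-matrix. The crossing in this lemma is $S_i(u)$ from Definition \ref{RmatrixA} (the $A_{2n}^{(2)}$ solution attached to the exterior powers), not $R_i(u)$ from Definition \ref{RmatrixD}: this is how the lemma is used later (in Proposition \ref{gamma} and Lemma \ref{lema} the crossings are explicitly $S(q^{p+2})$ and $S(q^{r+1})$), and it is visible in the statement itself, since the denominator $(uq-u^{-1}q^{-1})(uz^{-1}+u^{-1}z)$ is exactly the prefactor in Definition \ref{RmatrixA}. With the correct definition the proof is a short direct computation that needs none of your patching. Write $A$ for the two-$\gamma$ composite and $B$ for the metric diagram; the Clifford relations \eqref{cliffo}, \eqref{cliffu} express the composites of $\gamma\gamma$ with the two crossings as $z^{\pm1}B-q^{\pm1}A$ (with the appropriate matching of crossings), and the $u_i$-term closes the two vector strands into a tadpole on the \emph{open} spinor line, which is the $\delta$-relation in \eqref{tad2}, so that term contributes $\delta B$. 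Hence
\begin{align*}
(uz^{-1}+u^{-1}z)(uq-u^{-1}q^{-1})\,\gamma\gamma\,S_i(u)
&=(uz^{-1}+u^{-1}z)(uq-u^{-1}q^{-1})\,A\\
&\quad+\bigl[(uz^{-1}+u^{-1}z)(u^{-1}z^{-1}-uz)+(z+z^{-1})(q-q^{-1})\delta\bigr]\,B ,
\end{align*}
and using the blow-up relation $(q-q^{-1})\delta=z-z^{-1}$ the bracket equals $z^{-2}-u^{2}+u^{-2}-z^{2}+z^{2}-z^{-2}=-(u-u^{-1})(u+u^{-1})$, which is precisely the stated coefficient.

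Two specific steps in your proposal are not salvageable as written. First, the identity $\Delta=\delta(zq^{-1}+z^{-1}q)$ is false: $(zq^{-1}+z^{-1}q)\delta$ is the value of a closed \emph{vector} loop, from \eqref{skein1}, whereas $\Delta$ is the value of a closed \emph{spinor} loop, from \eqref{tad1}; under $z\mapsto q^{n}$ these specialise to the quantum dimensions of the vector and spin representations, which differ. The consistency of the two expressions for $(q+q^{-1})F_i(2)$ forces only $(q-q^{-1})\delta=z-z^{-1}$, i.e.\ the defining relation of $K'$, and involves $\Delta$ not at all. Second, the $u_i$-contribution is governed by the first ($\delta$-) relation of \eqref{tad2}, not by the $\Delta$-relation: the spinor line in this lemma is open, so no spinor trace is taken. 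Since with $R_i(u)$ from Definition \ref{RmatrixD} the left-hand side does not even reproduce the $\gamma\gamma$ term with coefficient $1$, the identity you propose to verify at the end is not the true one, and the ``clear denominators and check enough values of $u$'' step would expose the discrepancy rather than close the argument.
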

\begin{proof} This is a direct calculation from Definition \ref{RmatrixA}
 using the Clifford relations \eqref{cliffo} and \eqref{cliffu}.
\end{proof}

\begin{prop}\label{gamma} For all $p\ge 0$,
\begin{equation*}
 \incg{spinors.90}=\incg{spinors.91}-\frac{[n-p-1]}{[2n-2p-2]}[p+1]\incg{spinors.92}
\end{equation*}
\end{prop}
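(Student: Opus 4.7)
The plan is to prove the identity by induction on $p$, with Lemma \ref{lem1} supplying both the base case and the engine for the inductive step.

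For the base case $p = 0$, I specialise Lemma \ref{lem1} at $u = q$. At this value $(u - u^{-1})(u + u^{-1}) = (q - q^{-1})[2]$ while $(uq - u^{-1}q^{-1})(uz^{-1} + u^{-1}z) = (q - q^{-1})[2]\,\{1\}$, so the prefactor collapses to $1/\{1\} = [n-1]/[2n-2]$, matching the $p = 0$ instance of Proposition \ref{gamma} (where $[p+1] = 1$).

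For the inductive step, I peel one strand off the $(p+1)$-bundle via the recursion $E(p+1) = E(p)\,S_p(q^p)\,E(p)$ from the Jones--Wenzl--Birman--Wenzl construction. Applying Lemma \ref{lem1} with $u = q^p$ slides the spinor line across the freshly exposed $S_p(q^p)$ crossing, producing (i) a pass-through term in which the $\gamma$-vertex now acts on the $p$-bundle, and (ii) a cap-cup term disconnected from the spin line. The flanking $E(p)$'s absorb the leftover $S$-matrix by Lemma \ref{lemS} and, for the cap-cup piece, the arc produced from $u_i$ is glued into the outer $E(p)$ via \eqref{idS}. The pass-through term is precisely the LHS of Proposition \ref{gamma} at index $p-1$, which is handled by the inductive hypothesis, while the cap-cup term contributes a scalar multiple of the disconnected diagram on the RHS.

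The main obstacle is the coefficient bookkeeping. After substituting $\{p\} = [2n-2p]/[n-p]$ and $q^p + q^{-p} = [2p]/[p]$, the Lemma \ref{lem1} prefactor at $u = q^p$ simplifies to $\frac{[2p][n-p]}{[p+1][2n-2p]}$. This has to be combined with the inductive coefficient $\frac{[n-p][p]}{[2n-2p]}$ from the pass-through term together with the loop factor produced when the $u_i$-arc merges into $E(p)$, and the three contributions must telescope to the target $\frac{[n-p-1][p+1]}{[2n-2p-2]}$. I expect this to follow from the universal $q$-integer identity $[A+B][C] = [A][B+C] + [B][A-C]$ recorded in the Coefficients subsection, applied with $A$, $B$, $C$ of the form $bn+a$ chosen to match the numerators and denominators above; the most likely substitution is $A = n-p-1$, $B = 1$, $C = p$, which rewrites $[n-p][p]$ in terms of $[n-p-1][p+1]$ and a correction that cancels the Lemma \ref{lem1} contribution once $[2n-2p] = \{p\}[n-p]$ is used.
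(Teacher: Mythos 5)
Your base case is exactly the paper's: Lemma \ref{lem1} at $u=q$, and your simplification of the prefactor to $1/\{1\}=[n-1]/[2n-2]$ is correct. The inductive step, however, has genuine gaps. The move you propose --- peeling a strand with $E(p+1)=E(p)S_p(q^p)E(p)$ and then applying Lemma \ref{lem1} to the exposed crossing --- does not put that crossing in the local position Lemma \ref{lem1} needs: the exposed $S_p(q^p)$ sits between the two flanking copies of $E(p)$, so its strands do not both end on the spinor line at adjacent $\gamma$-vertices, and your claim that the pass-through term ``is precisely the LHS of Proposition \ref{gamma} at index $p-1$'' is exactly the unjustified leap. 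The paper's first step is not the idempotent recursion at all: it uses Definition \ref{RmatrixA} to rewrite the index-$(p+1)$ configuration so that a crossing $S(q^{p+2})$ (in your indexing $S(q^{p+1})$, not $S(q^{p})$) appears directly next to the $\gamma$-vertices, applies the inductive hypothesis first, and only then invokes Lemma \ref{lem1}, whose prefactor at $u=q^{p+2}$ is $\frac{[2p+4]}{[p+3]}\frac{[n-p-2]}{[2n-2p-4]}$ rather than the $\frac{[2p][n-p]}{[p+1][2n-2p]}$ you compute at $u=q^p$.

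Your treatment of the contraction term is also confused: you say the $u_i$-arc is ``glued into the outer $E(p)$ via \eqref{idS}'', but \eqref{idS} states $u_iE(p)=0=E(p)u_i$, so gluing that arc into the antisymmetrizer annihilates the term instead of producing a loop factor. In the paper the corresponding second diagram is handled by crossing symmetry together with Lemma \ref{lemS}, which absorbs a rotated $S$-matrix into the idempotent at the cost of the scalar $\frac{\{p+1\}}{\{p+2\}}\frac{[p+2]}{[p+3]}$; multiplied against the inductive coefficient $\frac{[p+1]}{\{p+1\}}$ this produces the second term of the paper's third line. Finally, the coefficient telescoping --- the whole content of the step --- is only asserted in your write-up; the identity that actually closes the paper's computation is the pure $q$-integer identity $[2p+4]+[p+1][p+2]=[p+2][p+3]$, not the mixed identity $[A+B][C]=[A][B+C]+[B][A-C]$ with $A=n-p-1$, $B=1$, $C=p$ that you guess, and with your $u=q^p$ prefactors the denominators $\{p\}$ and $[p+1]$ do not recombine into the target $[p+1]/\{p+1\}$. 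So while the overall flavour (induction, Lemma \ref{lem1}, absorption into idempotents, a $q$-integer identity) is right, the step as written would not go through without reworking both the choice of crossing and the coefficient bookkeeping.
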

\begin{proof} The proof is by induction on $p$. The case $p=0$ is Lemma \ref{lem1} with $u=q$.
The inductive step is the following calculation. In the diagrams a crossing represents $S(q^{p+2})$.
\begin{align*}
\incg{spinors.93} &= \incg{spinors.94} \\
&=\incg{spinors.95}-\frac{[n-p-1]}{[2n-2p-2]}[p+1]\incg{spinors.96}\\
&=\incg{spinors.97}-\frac{[2p+4]}{[p+3]}\frac{[n-p-2]}{[2n-2p-4]}\incg{spinors.98}\\
&\qquad -\frac{[n-p-2]}{[2n-2p-4]}\frac{[p+1][p+2]}{[p+3]}\incg{spinors.98}\\
&=\incg{spinors.97}-\frac{[n-p-2]}{[2n-2p-4]}[p+2]\incg{spinors.98}
\end{align*}

The first step is Definition \ref{RmatrixA}. The second step is the inductive hypothesis.
For the third step we simplify the first diagram using Lemma \ref{lem1} and we simplify the second diagram using crossing symmetry and Lemma \ref{lemS}. The fourth step is the identity
\[ [2p+4]+[p+1][p+2]=[p+2][p+3] \]
\end{proof}

\begin{prop}\label{pid} For all $a\ge 1$
\begin{equation}
 \incg{spinors.28} = \Delta \prod_{k=1}^a \frac{[k]}{\{k\}} \incg{spinors.29}
\end{equation}
\end{prop}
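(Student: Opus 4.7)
The plan is to proceed by induction on $a$.

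\emph{Base case $a=1$.} This reduces directly to the second equation of \eqref{tad2}. Since $[1]=1$ and $\{1\}=zq^{-1}+z^{-1}q$, we have $\Delta\,[1]/\{1\} = \Delta/(zq^{-1}+z^{-1}q)$, which is exactly the coefficient appearing in \eqref{tad2}.

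\emph{Inductive step.} I would use the recursive definition of the idempotent, $E(a+1) = E(a)\,S_a(q^a)\,E(a)$ (up to a normalization factor), to split the $(a+1)$-strand bubble into an $a$-strand bubble together with an inner configuration consisting of $S_a(q^a)$ flanked by the two $\gamma$-matrices on strands $a$ and $a+1$ of the surrounding spin loop. This inner configuration is precisely the local setup handled by Proposition \ref{gamma}, which rewrites it as an identity term minus a multiple of the cap-cup $u_a$. The cap-cup term is annihilated on at least one side by the adjacent $E(a)$ factor, since $u_i\,E(p)=0=E(p)\,u_i$ by \eqref{idS}, so it drops out. The surviving identity term collapses the $(a+1)$-st strand into the spin loop, and the resulting diagram is the $a$-strand bubble, which by the inductive hypothesis evaluates to $\Delta\prod_{k=1}^{a} [k]/\{k\}$ times the identity on $\Lambda^a V$.

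\emph{Scalar bookkeeping.} The main obstacle is to verify that the scalar produced in this reduction --- combining the normalization in the recursion for $E(a+1)$ with the coefficient from Proposition \ref{gamma} evaluated at the appropriate parameter --- reduces cleanly to $[a+1]/\{a+1\} = [a+1][n-a-1]/[2n-2a-2]$. This is a routine manipulation of $q$-integer identities, of the same flavour as $[2p+4]+[p+1][p+2]=[p+2][p+3]$ used in the proof of Proposition \ref{gamma}, together with the general identity $[A+B][C]=[A][B+C]+[B][A-C]$ recorded after the definition of $[n+a]$. Once these are in hand, the telescoping structure of the induction yields the stated product $\Delta\prod_{k=1}^{a+1}[k]/\{k\}$.
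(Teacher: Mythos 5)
Your base case is fine, but the inductive step does not work as written, and it is in any case a different route from the paper's: the paper expands the spinor loop through the projector using Proposition \ref{exp} (in the form \eqref{expz}), uses the eigenvalue property $\sigma_iE(a)=-q^{-1}E(a)$ from \eqref{idS} to pull the surviving terms into a common diagram with factors $(-q)^k$, and sums the resulting geometric series to get the factor $[a]/\{a\}$ multiplying the $(a-1)$-strand diagram. No recursion $E(a+1)=E(a)S_a(q^a)E(a)$ and no appeal to Proposition \ref{gamma} occur there.

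The concrete gaps in your step are these. First, the contraction term you discard is not annihilated: the local statement about a crossing flanked by two $\gamma$-vertices is Lemma \ref{lem1} (not Proposition \ref{gamma}), and its second term is a contraction of strands $a$ and $a+1$, i.e.\ a $u_a$-type element; but \eqref{idS} only gives $u_iE(a)=0$ for $1\le i\le a-1$, and $E(a)$ does not involve the $(a+1)$-st strand at all, so $u_a\,(E(a)\otimes 1)\neq 0$ in general. Second, even if that term did vanish, the surviving uncrossed term still has all $a+1$ strands attached to the spin loop by $\gamma$-vertices -- deleting a crossing does not delete a strand -- so it is not the $a$-strand bubble and the induction does not close; the reduction from $a+1$ to $a$ attached strands has to come from a Clifford contraction, which is precisely the term you throw away (in the paper's proof it is exactly these contraction terms that produce $[a]$). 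Third, in $E(a+1)=(E(a)\otimes 1)\,S_a(q^a)\,(E(a)\otimes 1)$ the inner $E(a)\otimes 1$ sits between the crossing and the $\gamma$-vertex of strand $a$, so the configuration needed for Lemma \ref{lem1} is not even present without further argument. Finally, the scalar cannot be rescued by routine $q$-identities: a crossing at spectral parameter $q^a$ carries coefficients of the shape $[a]/\{a\}$ (Proposition \ref{gamma} at $p=a-1$) or $[2a]/([a+1]\{a\})$ (Lemma \ref{lem1} at $u=q^a$), whereas your step from $a$ to $a+1$ must produce $[a+1]/\{a+1\}$. A corrected version of your idea would have to follow the pattern of the proof of Proposition \ref{tri}: keep the contraction term, absorb the crossing by crossing symmetry and Lemma \ref{lemS}, and let the contraction drive the recursion; as written, the argument does not establish the proposition.
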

\begin{proof} The proof is by induction on $a$. The basis of the induction is
$a=1$ which is a relation in \eqref{tad2}. The inductive
step is the following calculation
\begin{align*}
 \incg{spinors.30}&=\frac{q^{-a+1}}{zq^{-a}+z^{-1}q^{a}}
 \sum_{k=0}^{a-1}(-q)^k\incg{spinors.31} \\
&=\frac{q^{-a+1}}{zq^{-a}+z^{-1}q^{a}}
 \sum_{k=0}^{a-1}(-q)^{2k}\incg{spinors.32} \\
&=\frac{q^{-a+1}}{zq^{-a}+z^{-1}q^{a}}
  \left(\frac{1-q^{2a}}{1-q^2}\right)\incg{spinors.32} \\
&=\left(\frac{1}{z q^{-a}+z^{-1}q^{a}}\right)
  \left(\frac{q^a-q^{-a}}{q-q^{-1}}\right)\incg{spinors.32} 
\end{align*}
\end{proof}

\begin{cor} For all $a\ge 1$
 \begin{equation*}
 \incg{spinors.33}=\left(
\prod_{k=1}^a \frac{1}{\{k\}}
\right) [a]! \dim_q(a) \incg{spinors.16} 
\end{equation*}
\end{cor}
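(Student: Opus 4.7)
The plan is to derive this corollary directly from Proposition \ref{pid} by closing up the diagram. The scalar $\Delta\prod_{k=1}^{a}[k]/\{k\}$ already appearing in Proposition \ref{pid} contains the factor $\prod_{k=1}^{a}[k]=[a]!$ that appears in the corollary; the remaining task is to account for the ratio $\dim_q(a)/\Delta$ between the two statements, which should arise geometrically from the fact that the diagram spinors.33 is the closure of the diagram spinors.28 along the $V$-edges, while spinors.16 is the corresponding closure of spinors.29.

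Concretely, I would proceed as follows. First, locate inside spinors.33 the subdiagram to which Proposition \ref{pid} applies (the $a$-labelled $V$-edge entering the spinor structure via the $\Gamma^{(a)}$-vertex and the idempotent $E(a)$), and replace it by $\Delta\prod_{k=1}^{a}[k]/\{k\}$ times the simpler subdiagram from the right-hand side of Proposition \ref{pid}. What remains is the closure of spinors.29: namely, a closed $a$-labelled $V$-loop together with a spinor arc. The closed $V$-loop with the $E(a)$ projector evaluates to $\dim_q V(\omega_a)=:\dim_q(a)$ by Proposition \ref{dimq}, and the residual spinor loop contributes a factor of $\Delta$ by the first equation of \eqref{tad1}, which cancels against the $\Delta$ in the scalar from Proposition \ref{pid}. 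Rewriting $\prod_{k=1}^{a}[k]=[a]!$ and collecting the surviving $\prod_{k=1}^{a}1/\{k\}$ then yields exactly
\[
\incg{spinors.33}=\Bigl(\prod_{k=1}^{a}\tfrac{1}{\{k\}}\Bigr)[a]!\,\dim_q(a)\,\incg{spinors.16}.
\]

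The main obstacle is the normalization bookkeeping: one must verify that the closure of the $V$-edge produces the quantum dimension $\dim_q(a)$ exactly (not some rescaling of it) and that the spinor-loop factor of $\Delta$ produced by \eqref{tad1} precisely absorbs the $\Delta$ from Proposition \ref{pid}. Once these two cancellations are correctly tracked, the combinatorial step $\prod_{k=1}^{a}[k]=[a]!$ makes the identity immediate, so no new induction or skein manipulation beyond that already present in Proposition \ref{pid} and Proposition \ref{dimq} is required.
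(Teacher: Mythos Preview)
Your strategy—apply Proposition \ref{pid} and then close up to pick up $\dim_q(a)$—is the right one and is what the paper has in mind. However, your identification of the diagram boundaries is off, and this makes the $\Delta$-cancellation step incorrect as written.

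The diagram on the left of Proposition \ref{pid} already carries a \emph{closed} spinor loop; this is why a factor of $\Delta$ appears on the right-hand side there, and why the base case $a=1$ is the \emph{second} relation in \eqref{tad2} (the one with $\Delta/\{1\}$), not the first. Correspondingly, the diagram on the right of Proposition \ref{pid} is simply the idempotent $E(a)$ with no spinor component at all. Closing the $V$-edges in Proposition \ref{pid} therefore produces a \emph{scalar} (the closed theta diagram, which reappears later as Lemma \ref{theta}), not the open-spinor diagram of the corollary; and the $V$-closure of the right-hand side is just $\dim_q(a)$, with no ``residual spinor arc'' or loop present.

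The missing step is this: the left-hand diagram of the corollary is an endomorphism of $S$, hence a scalar multiple of $\mathrm{id}_S$. To determine that scalar, close the spinor line on both sides. On the right this turns $\mathrm{id}_S$ into $\Delta$ by \eqref{tad1}; on the left it produces the closed theta diagram, which \emph{is} the $V$-closure of the diagram in Proposition \ref{pid} and hence equals $\Delta\prod_{k=1}^a\bigl([k]/\{k\}\bigr)\dim_q(a)$ by Proposition \ref{pid} together with Proposition \ref{dimq}. Dividing through by $\Delta$ gives the corollary. So the $\Delta$ that cancels is the one coming from $\mathrm{tr}(\mathrm{id}_S)=\Delta$, not from anything inside the right-hand diagram of Proposition \ref{pid}.
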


\begin{prop}\label{abc} Let $(a,b,c)=(r+t,r+s,s+t)$ be admissible. Then
\begin{equation*}
 \incg{spinors.34}=\Delta\left(\prod_{k=1}^{r+s+t} \frac{1}{\{k\}}
\right)
\frac{[a]![b]![c]!}{[r]![s]![t]!}\incg{spinors.35} 
\end{equation*}
\end{prop}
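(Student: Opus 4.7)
The plan is to prove Proposition \ref{abc} by induction on $N = r+s+t$. The base case is when two of $r,s,t$ vanish, in which case the claim reduces directly to Proposition \ref{pid}: for instance, with $s=t=0$ one has $(a,b,c)=(r,r,0)$, the multinomial $[a]![b]![c]!/([r]![s]![t]!)$ collapses to $[r]!$, and the prefactor $\prod_{k=1}^{N}1/\{k\}$ together with the $[r]!$ reassembles as $\prod_{k=1}^r [k]/\{k\}$, matching the evaluation of a single bundle of $r$ strands connected by the spin loop.

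For the inductive step, by symmetry of the statement in $r,s,t$, I would assume $t \ge 1$ and peel off a single strand of the $t$-bundle joining the $a$- and $c$-edges. The key tool is the recursive decomposition of the antisymmetrizer $E(a)$ as $E(a-1)$ adjoined with one free strand (and likewise for $E(c)$); this produces scalar factors coming from the $q$-binomial identity \eqref{qbinom} and isolates a single strand traveling through the spin loop between two $\gamma$-vertices. Proposition \ref{exp} then expands this isolated arc as a linear combination indexed by $k=0,1,\dots,p$, and all terms except $k=0$ vanish: the higher-$k$ terms produce either a spin tadpole carrying a single non-trivial edge (killed by the corollary $\incg{spinors.27}=0$) or a spin loop with an odd total number of $\gamma$-vertices (killed by the corollary on vanishing of odd $\gamma$-traces). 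The surviving $k=0$ term contributes the factor $1/\{N\}$ and leaves a diagram of the same shape with parameters $(r, s, t-1)$, to which the inductive hypothesis applies.

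The main obstacle is the combinatorial bookkeeping: one must verify that the $q$-binomial factors from decomposing $E(a)$ and $E(c)$, combined with the factor $1/\{N\}$ from Proposition \ref{exp} and the multiplicity count for choosing which strand to peel, assemble into the correct multinomial recursion relating the coefficient for $(r,s,t-1)$ to the coefficient for $(r,s,t)$. This amounts to a standard $q$-identity following from \eqref{qbinom}, but one must keep careful track of the scalar factors introduced by the idempotent identities \eqref{idS} when sliding strands past one another, particularly the powers of $q$ and the signs produced via Lemma \ref{twist}. Once the scalars match, the induction closes and gives the stated formula.
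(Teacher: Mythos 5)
Your base case and the overall shape (peel strands, expand the isolated arc with Proposition \ref{exp}, induct) are in the right spirit, but the inductive step rests on a claim that is false: it is not true that in the expansion of Proposition \ref{exp} all terms except $k=0$ die. The vanishing mechanisms you invoke do not apply here. Expanding one strand pairs it in turn with every other $\gamma$-leg sitting on the \emph{same} spin loop; each such pairing removes two $\gamma$-vertices, so the loop stays a single loop with an even number of $\gamma$-vertices and no tadpole is ever created --- neither the tadpole corollary nor the odd-trace corollary can kill anything. The terms that do vanish are those pairing the chosen strand with another strand of its \emph{own} antisymmetrized bundle, and they vanish for a different reason, namely $u_iE(p)=0$ from \eqref{idS}. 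All the cross terms --- the pairings with the $b$ strands and with the $c$ strands, of which there are $b+c$, not one --- survive, and they are exactly what produces the multinomial $[a]![b]![c]!/([r]![s]![t]!)$: after absorbing crossings into the antisymmetrizers via $\sigma_iE=-q^{-1}E$ they collapse into two geometric sums of the shape $q^{c-1}[c]$ and $q^{a+c-1}[b]$, and the induction closes with the identity $q^{-r}[t]+q^{t}[r]=[r+t]=[a]$. With only a single surviving term carrying $1/\{N\}$, your recursion would reproduce the product $\prod_k 1/\{k\}$ but could not generate the multinomial factor; the $q$-binomial bookkeeping from decomposing $E(a)$ and $E(c)$ that you hope will supply it is not where that factor comes from.

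There is also a mismatch in the setup: the left-hand diagram of Proposition \ref{abc} has the three bundles $a$, $b$, $c$ attached to the spin loop, but no internal $r$-, $s$-, $t$-bundles --- that structure lives only in the trivalent vertex on the right. So ``peeling a strand of the $t$-bundle'' is not an available move on the left-hand side; a strand joining the $a$- and $c$-attachments is precisely what the non-vanishing cross terms of the expansion \emph{create}, one for each of the $c$ strands (and likewise for $b$). The paper's proof makes this the whole content of the step: it inducts on $a$ (rather than on $r+s+t$, an inessential difference), applies the expansion to one strand of the $a$-bundle, keeps all $b+c$ cross terms, sums them as above, and invokes the inductive hypothesis for the triples $(r,s,t-1)$ and $(r-1,s,t)$ simultaneously. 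To repair your argument you would need to replace the vanishing claim by this summation of cross terms; as written, the step fails.
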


\begin{proof} The proof is by induction on $a$. The basis of the induction is the case $a=0$
which is Proposition \ref{pid}. The inductive step is the following calculation.
 \begin{align*}
  \left(\frac{zq^{-m-n-p}+z^{-1}q^{m+n+p}}{q^{-m-n-p+1}}\right)&\incg{spinors.34}\\
=\sum_{r=0}^{c-1}{(-q)}^r&\incg{spinors.36}\\
&\qquad +\sum_{r=0}^{b-1}(-q)^{r+c}\incg{spinors.37}\\
=q^{c-1}[c]&\incg{spinors.38}
+q^{a+c-1}[b]\incg{spinors.39}
 \end{align*}

Now use the inductive hypothesis. This gives
\begin{multline*}
 \incg{spinors.34}=\Delta\left(\prod_{k=1}^{m+n+p} \frac{1}{\{k\}}
\right)
\frac{[a-1]![b]![c]!}{[m]![n]![p]!}\\
\left( q^{-m}[p]+q^p[m]\right)\incg{spinors.35}
\end{multline*}
\end{proof}

\subsection{Completeness}
Next we discuss completeness following \cite[(11.19)]{MR2418111}. This is based on the assumption
that we have a relation of the form
\begin{equation}\label{cp}
 \incg{spinors.121}=\sum_{m=0}^{\min(a,b)}C(m)\incg[.75in]{spinors.122}
\end{equation}
Then the problem is to determine the coefficients $C(m)$. This assumption is justified
by the representation theory.
\begin{prop} For $0\le 2m\le a+b$,
 \begin{equation}\label{cpl}
  C(m)=\left(\prod_{k=a+b-2m+1}^{a+b-m} \frac{1}{zq^{-k}+z^{-1}q^{k}}
\right) \qbinom{a}{m} \qbinom{b}{m} [m]! 
 \end{equation}
\end{prop}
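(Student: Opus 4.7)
The plan is to determine each $C(m)$ by isolating the $m$-th term on the right of \eqref{cp}, reducing both sides to closed networks whose values have already been computed in the excerpt.

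First, I would close both sides of \eqref{cp} at the top by attaching a trivalent vertex of labels $(a,b,c)$ with $c=a+b-2m'$, and then joining the outer solid strands of labels $a$ and $b$ back to themselves as demanded by the basis diagrams on the right. On the right-hand side, the uniqueness of the Clebsch-Gordan decomposition into irreducible summands (the justification appealed to by the author from representation theory) forces every term with $m\neq m'$ to vanish, and the surviving contribution is $C(m')\cdot\theta(a,b,c)$, where $\theta(a,b,c)$ is the closed theta network evaluated via Proposition \ref{abc} and the corollary of Proposition \ref{pid}. On the left-hand side, the same composition produces a closed spinor network consisting of the $(a,b,c)$-trivalent vertex joined back to the two $\gamma$-vertices by solid strands of labels $a$ and $b$, with a spin loop running between the two $\gamma$-vertices. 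Call its value $E_{m'}$. Then $C(m') = E_{m'}/\theta(a,b,c)$.

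To evaluate $E_{m'}$ I would argue by induction on $m$. The base case $m=0$ corresponds to $c=a+b$, at the boundary of admissibility, where the two $\gamma$-vertices on the left fuse into a single trident $(a,b,a+b)$ by the admissibility constraint implicit in Proposition \ref{gamma} (no strand of either solid edge can be pulled back without violating admissibility), and the closed left network reduces to exactly the trident theta network appearing on the right, giving $C(0)=1$. For the inductive step, I would pull one strand of the $a$-block (say) across the spin loop using Proposition \ref{gamma}; this converts the $m$-configuration into a combination involving the $(m-1)$-configuration plus a term that, by crossing symmetry and Lemma \ref{lemS}, folds back into the same family. Each such pass contributes an explicit factor of the form $[n-k-1]/[2n-2k-2] = 1/\{k+1\}$; after $m$ passes the spin loop contracts to a spin tadpole that is evaluated by the relations \eqref{tad1} and \eqref{tad2}, and the remaining trivalent network is evaluated by Proposition \ref{abc}. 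Collecting gives a recurrence $C(m)/C(m-1)$ that matches the proposed closed form.

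The main obstacle will be the bookkeeping in the inductive step: ensuring that the factor extracted from Proposition \ref{gamma} on the $m$-th pass combines cleanly with the shift of the triangle coefficient from Proposition \ref{abc} and with the ratio $\theta(a,b,c)/\theta(a,b,c-2)$. All three ingredients are products of $q$-integers $[k]$ and symbols $\{k\}=[2n-2k]/[n-k]$, and the content of the claim is precisely that these ratios telescope into $\left(\prod_{k=a+b-2m+1}^{a+b-m}\{k\}^{-1}\right)\qbinom{a}{m}\qbinom{b}{m}[m]!$. The key algebraic identity driving the telescoping is the same three-term $q$-integer relation $[A+B][C]=[A][B+C]+[B][A-C]$ invoked after the definition of $[n+a]$, which already powered the inductive identity $[2p+4]+[p+1][p+2]=[p+2][p+3]$ used in the proof of Proposition \ref{gamma}; its generalisation here is what makes the induction close.
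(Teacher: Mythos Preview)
Your strategy of closing off \eqref{cp} so as to isolate a single $C(m)$ is exactly what the paper does, but you have overcomplicated the evaluation and swapped the roles of the two closed networks. In the paper's closure the \emph{left} side becomes precisely the spinor network of Proposition~\ref{abc} (the spin loop carrying the three bundles $a,b,c$ joined at a single strand vertex), while the \emph{right} side becomes $C(m)$ times the spinor bigon of Proposition~\ref{pid} with label $a+b-2m$. Both of these values are already computed in closed form, so dividing one by the other gives $C(m)$ directly; there is no induction on $m$ and no appeal to Proposition~\ref{gamma}.

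Your proposed inductive evaluation of the left side via Proposition~\ref{gamma}, pulling strands across the spin loop one at a time, is not wrong in spirit, but it amounts to rederiving Proposition~\ref{abc} from scratch (indeed that proposition is itself proved by exactly this kind of strand-by-strand induction, using the expansion formula rather than Proposition~\ref{gamma}). So the ``main obstacle'' you anticipate, the telescoping of products of $[k]$'s and $\{k\}$'s, has already been absorbed into Proposition~\ref{abc}; once you recognise the left-hand closed network as an instance of that proposition, the bookkeeping disappears. The three-term identity you highlight is relevant to the proof of Proposition~\ref{abc}, not to the present argument.
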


\begin{proof}
The coefficients can be determined since this relation implies
\begin{equation*}
 \incg[1in]{spinors.123}=C(m)\incg[1in]{spinors.124}
\end{equation*}
Then the left hand side can be simplified using Proposition \ref{abc}
and the right hand side can be simplified using Proposition \ref{pid}
This gives an equation for $C(m)$.
\end{proof}

\begin{lemma}\label{lema} For $r\ge 0$,
\begin{equation*}
 \incg{spinors.99}=\frac{\{r\}}{\{r+1\}}\frac{[r+1]}{[r+2]}\incg{spinors.100}
\end{equation*}
\end{lemma}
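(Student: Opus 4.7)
The plan is to apply the completeness relation \eqref{cp} to the left-hand side and then close everything off using the triangle evaluation. The diagram \incg{spinors.99} evidently sits at the boundary between two idempotents, and the right-hand side differs from it by a relabelling that increments one of the labels by one. This is precisely the kind of bubble identity that the completeness formula was designed to handle.

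First I would insert \eqref{cp} at the appropriate spot so that the left-hand side becomes a sum $\sum_m C(m) D_m$ where each $D_m$ is a diagram with an internal rung of label $m$. Because the surrounding structure already forces the two sides of the rung to be idempotents of the specified labels, projection onto a sub-idempotent of smaller index kills most terms, leaving only one or at most two surviving $m$. If only one term survives, the claim reduces directly to extracting the explicit coefficient $C(m)$ from \eqref{cpl}; if two survive, I would use the triangle evaluation in Proposition \ref{abc} together with the loop evaluation in Proposition \ref{pid} to reduce each surviving term to the target diagram \incg{spinors.100} and then verify that the sum of the two coefficients collapses to $\tfrac{\{r\}}{\{r+1\}}\tfrac{[r+1]}{[r+2]}$.

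The arithmetic identity needed at the end will involve the telescoping cancellation among the products $\prod_{k=a+b-2m+1}^{a+b-m}\frac{1}{zq^{-k}+z^{-1}q^{k}}=\prod \{k\}^{-1}$ arising from $C(m)$, together with the $q$-binomial recurrence \eqref{qbinom} applied to $\qbinom{r+1}{m}$ and $\qbinom{r+2}{m}$. The key to the clean form of the answer is that almost every $\{k\}$ and every $[k]$ appearing in the two $C(m)$'s cancels pairwise, leaving exactly the boundary terms $\{r\},\{r+1\},[r+1],[r+2]$.

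The main obstacle will be book-keeping rather than conceptual: correctly identifying which value(s) of $m$ in the completeness sum are non-zero given the idempotent structure around the bubble, and then tracking the factors from Proposition \ref{abc} (which contributes $[a]![b]![c]!/[r]![s]![t]!$ type factors and a $\prod\{k\}^{-1}$) against the factors from $C(m)$ itself. A minor additional step is to use Proposition \ref{pid} to normalise the trace that appears when one closes the loop on the opposite side, so that the final answer is expressed in terms of \incg{spinors.100} rather than a scalar multiple of a closed network.
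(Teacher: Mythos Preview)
Your route is quite different from the paper's, and I think it also rests on a misreading of what the diagram is.  The paper's proof is a two-line calculation entirely inside the Birman--Wenzl category: one uses the recursive definition of the antisymmetriser, $E(r+2)=E(r+1)S_{r+1}(q^{r+1})E(r+1)$, to replace the relevant idempotent by a configuration containing the crossing $S(q^{r+1})$; then one applies crossing symmetry to that $S$-matrix and absorbs the rotated crossing back into the neighbouring idempotent via Lemma~\ref{lemS}.  The coefficient $\tfrac{\{r\}}{\{r+1\}}\tfrac{[r+1]}{[r+2]}$ drops out immediately from the prefactor in the crossing-symmetry formula for $S_i(u)$.  No spinor lines, no completeness, no $q$-binomial bookkeeping.

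The concern with your plan is that the completeness relation \eqref{cp} expands a product of two $\Gamma$-matrices along a dashed spinor edge, whereas Lemma~\ref{lema} (as its proof makes clear) is a statement about solid-line antisymmetrisers only: there is no spinor edge present on which to invoke \eqref{cp}.  So before you can even begin the expansion you would have to artificially introduce a spinor loop, use \eqref{cp}, and then remove it again via Proposition~\ref{pid}; this is possible in principle but is a long detour, and the ``only one or two $m$ survive'' heuristic you rely on is not automatic once you have added that extra structure.  Even if you push it through, the arithmetic you sketch (products of $\{k\}$'s from $C(m)$, triangle factors from Proposition~\ref{abc}, the $q$-binomial recursion) would have to reproduce exactly the crossing-symmetry prefactor, which is a much less transparent way to arrive at $\tfrac{\{r\}}{\{r+1\}}\tfrac{[r+1]}{[r+2]}$ than reading it off directly from Definition~\ref{RmatrixA}.

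In short: the paper treats this as an $R$-matrix identity and gets it in one step; your proposal tries to route it through the spinor recoupling theory, which is both heavier and not obviously applicable to a diagram with no spinor edge.
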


\begin{proof} The proof is the following calculation.
\begin{equation*}
 \incg{spinors.99}= \incg{spinors.101}= 
\frac{\{r\}}{\{r+1\}}\frac{[r+1]}{[r+2]}
\incg{spinors.100}
 \end{equation*}
For the first step we use Definition \ref{RmatrixA}; the crossing is $S(q^{r+1})$. In the second step we use
crossing symmetry and Lemma \ref{lemS}.
\end{proof}

\begin{cor}\label{cor1} For $r\ge 0$,
 \begin{equation*}
 \incg[1in]{spinors.102}=
\frac{\{a\}}{\{a+r+1\}}\frac{[a+1]}{[a+r+2]}
\incg[1in]{spinors.103}
\end{equation*}
\end{cor}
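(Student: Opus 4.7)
The plan is induction on $r$, using Lemma \ref{lema} as the key inductive step. First, note that the target coefficient has a clean telescoping form
\[ \frac{\{a\}}{\{a+r+1\}} \frac{[a+1]}{[a+r+2]} = \prod_{k=0}^{r} \frac{\{a+k\}}{\{a+k+1\}} \frac{[a+k+1]}{[a+k+2]}, \]
and each factor matches the coefficient $\frac{\{r\}}{\{r+1\}} \frac{[r+1]}{[r+2]}$ from Lemma \ref{lema} with the parameter specialised to $a+k$.

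For the base case $r = 0$, the diagram reduces to a single application of Lemma \ref{lema} in which the strand being passed across the idempotent has label $a$ rather than $1$. Since the argument used in the proof of Lemma \ref{lema} goes through Definition \ref{RmatrixA} and Lemma \ref{lemS}, and Lemma \ref{lemS} absorbs $S$-matrices on any $E(p)$ idempotent, the calculation yields coefficient $\frac{\{a\}}{\{a+1\}}\frac{[a+1]}{[a+2]}$, which agrees with the claim at $r=0$.

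For the inductive step from $r$ to $r+1$, I would peel off the outermost crossing using the same mechanism (Definition \ref{RmatrixA} followed by crossing symmetry and Lemma \ref{lemS}), with the spectral parameter chosen so that the extracted factor is precisely $\frac{\{a+r+1\}}{\{a+r+2\}}\frac{[a+r+2]}{[a+r+3]}$; the residual diagram is exactly the left-hand side of the corollary at level $r$. Applying the inductive hypothesis and multiplying the two scalars gives the telescoped product $\frac{\{a\}}{\{a+r+2\}}\frac{[a+1]}{[a+r+3]}$, which is the claimed coefficient at level $r+1$.

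The main obstacle is the bookkeeping of spectral parameters: one must verify that at each inductive step the relevant crossing can be written as $S(q^{\bullet})$ for precisely the value that makes Lemma \ref{lema} (or an immediate recasting of it via crossing symmetry) apply, and that the $a$-cable interacts with Lemma \ref{lemS} exactly as a single strand does. Once those parameters are pinned down the telescoping is automatic and the remaining manipulations are routine.
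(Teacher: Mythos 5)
Your overall strategy is the paper's: an induction on $r$ in which each step is a single application of Lemma \ref{lema}, and your observation that the stated coefficient is the telescoping product $\prod_{k=0}^{r}\frac{\{a+k\}}{\{a+k+1\}}\,\frac{[a+k+1]}{[a+k+2]}$, with each factor equal to the coefficient of Lemma \ref{lema} specialised to parameter $a+k$, is exactly the right bookkeeping; in particular the base case $r=0$ is an instance of Lemma \ref{lema} itself (with its parameter determined by the diagram to be $a$), which is how the paper argues.

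Where your write-up has a genuine soft spot is the appeal to a ``cabled'' version of Lemma \ref{lema}: you treat the strand being pulled through as carrying label $a$ and justify the coefficient by saying that Lemma \ref{lemS} ``absorbs $S$-matrices on any $E(p)$ idempotent''. Lemma \ref{lemS} only concerns single-strand crossings $S_i(u)$ sitting inside an idempotent $E(p)$; it says nothing about a crossing of an $a$-labelled cable, and if the argument really needed to move an $a$-labelled edge in one stroke you would owe a separate proof (for instance an induction on $a$ decomposing the cabled crossing), which you have not supplied. In fact no such generalisation is required, and the spectral parameter is not something you get to ``choose so that the extracted factor is precisely'' the desired one: at the $k$-th stage the crossing being removed is a single crossing whose parameter is forced by the recursive construction of the idempotents, $E(p+1)=E(p)S_p(q^p)E(p)$ (equivalently the characterisation \eqref{idS} together with Lemma \ref{lemS}), namely $S(q^{a+k+1})$, so Lemma \ref{lema} applies verbatim with parameter $a+k$ and yields your factor. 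Pinning this down from the diagram, rather than reverse-engineering it from the answer, is exactly the verification you deferred as ``bookkeeping''; once it is made explicit, your telescoping argument coincides with the paper's proof.
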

\begin{proof}
 The proof is by induction on $r$. The basis of the induction is the case $r=0$
which is the case $r=0$ of Lemma \ref{lema}. The inductive step is an application
of Lemma \ref{lema}.
\end{proof}

\begin{prop}\label{tri} For all $a,b,m\ge 0$,
 \begin{equation*}
 \incg{spinors.115}=\prod_{k=0}^{m-1}
\frac{\{k\}}{\{a+k\}\{b+k\}}[2n-a-b-k]
\incg{spinors.116}
\end{equation*}
\end{prop}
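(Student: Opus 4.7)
The plan is to prove Proposition \ref{tri} by induction on $m$, peeling off a single connecting strand at each step using the machinery developed in Lemma \ref{lema} and Corollary \ref{cor1}.

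For the base case $m=0$ the product is empty, hence equal to $1$, and the two diagrams in spinors.115 and spinors.116 coincide, so the identity is trivial. (If the base case is instead $m=1$, it reduces to the combination of Corollary \ref{cor1} with the trace identity $\{k\}=[2n-2k]/[n-k]$; this is a direct computation.)

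For the inductive step, suppose the formula holds with $m$ strands connecting the $a$-idempotent and the $b$-idempotent. To handle $m+1$ strands, I would isolate one of the bundled strands and absorb it: first use Corollary \ref{cor1} on the $a$-side to pull the extra strand out of the $a$-idempotent at the cost of a factor involving $\{a+k\}$ and $[a+k+\cdots]$, and then apply the same manipulation on the $b$-side to produce a factor involving $\{b+k\}$. What remains after both absorptions is an $m$-strand configuration to which the inductive hypothesis applies, plus a local bubble formed by the separated strand. Evaluating this bubble (using Proposition \ref{abc} together with the identity $\{k\}=[2n-2k]/[n-k]$) should account for the numerator $\{m\}$ and contribute the $q$-integer $[2n-a-b-m]$ that is not of the form $\{j\}$.

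The main obstacle will be producing the factor $[2n-a-b-m]$ cleanly, since the elementary moves from Lemma \ref{lema} and Corollary \ref{cor1} output expressions in $\{a+k\}$, $\{b+k\}$ and ordinary $[j]$'s, and one must combine them via an algebraic identity to collapse them into a single $[2n-a-b-m]$. The natural tool is the master identity $[A+B][C]=[A][B+C]+[B][A-C]$ stated early in the paper for $A,B,C$ of the form $bn+a$, applied with parameters tuned to the strand labels $a$, $b$, and $k=m$; the two terms on the right of that identity should correspond to the two halves of the absorption/bubble computation, and their sum will deliver $[2n-a-b-m]$ as required. Once this single algebraic collapse is in place, the rest of the inductive step is bookkeeping.
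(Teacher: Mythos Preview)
Your inductive scaffold and your instinct that a single algebraic identity collapses the step are both correct; the paper indeed proves this by induction on $m$ and finishes with the identity
\[
[2n-b-m]\{a+m\}-[a]\{b\}=[2n-a-b-m]\{m\},
\]
which is a specialisation of the master identity you cite.

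The gap is in the toolkit you plan to use for the inductive step. The diagrams in Proposition~\ref{tri} have the $m$ connecting strands attached to a spinor line through $\Gamma$-matrix vertices; peeling off one of those strands is not a pure vector-idempotent manipulation. Lemma~\ref{lema} and Corollary~\ref{cor1} only rearrange antisymmetrisers $E(p)$ among themselves and never see the dashed line, so applying them ``on the $a$-side'' and ``on the $b$-side'' does not separate a handle strand from the spinor vertex, and no bubble of the kind you describe actually forms. Proposition~\ref{abc} is likewise not what produces the factor $[2n-a-b-m]$.

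What the paper does instead is apply Proposition~\ref{gamma} at the $a$-vertex (with $p=a+m-1$) to split the $(m{+}1)$-diagram into two pieces; then to the first piece it applies Proposition~\ref{gamma} again at the $b$-vertex (with $p=b+m-1$), and to the second piece it applies Corollary~\ref{cor1} (with $r=m-1$). The coefficients $[n-p-1]/[2n-2p-2]=1/\{p+1\}$ coming out of Proposition~\ref{gamma} are exactly what feed into the identity above and manufacture $[2n-a-b-m]$. So Corollary~\ref{cor1} does appear, but only as a secondary simplification after Proposition~\ref{gamma} has done the essential work of detaching the strand from the spinor line. Without invoking Proposition~\ref{gamma} your argument cannot get started.
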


\begin{proof} The proof is by induction on $m$. The basis of the induction is the case $m=0$.
The inductive step is the following calculation.
\begin{multline*}
 \incg[.75in]{spinors.117}=\incg[.75in]{spinors.119}-
\frac{[a+m]}{\{a+m\}}\incg[.75in]{spinors.120}\\
=[n]\incg[.75in]{spinors.115}-\frac{[b+m]}{\{b+m\}}\incg[1in]{spinors.115}\\
-\frac{\{a-1\}}{\{a+m-1\}\{a+m\}}[a]
\incg[.75in]{spinors.118} \\
=\frac{[2n-b-m]}{\{b+m\}}\incg[.75in]{spinors.115}\\
-\frac{\{a-1\}}{\{a+m-1\}\{a+m\}}[a]
\incg[.75in]{spinors.118}
\end{multline*}
The first step is an application of Proposition \ref{gamma} with $p=a+m-1$. Then in the second step we use 
Proposition \ref{gamma} with $p=b+m-1$ for the first diagram and Corollary \ref{cor1} with $r=m-1$ for the second diagram.

Now apply the inductive hypothesis and the identity
\[ [2n-b-m]\{a+m\}-[a]\{b\}=[2n-a-b-m]\{m\} \]
\end{proof}

Now we can calculate the $3j$-symbols.
\begin{lemma}\label{theta}
\begin{equation*}
\incg{spinors.141} =
\Delta \left( \prod_{k=1}^a \frac{1}{\{k\}}\right) \frac{\{1\}}{\{0\}}\frac{[2n]!}{[2n-a]!}
\end{equation*}
\end{lemma}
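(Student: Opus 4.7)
The plan is to derive Lemma \ref{theta} directly from the unnamed Corollary that immediately follows Proposition \ref{pid}, by closing up the spin strand. That Corollary asserts that the diagram spinors.33 (the spin line with an $a$-labelled loop attached by two Clifford vertices, with the projector inserted on the $a$-edge) equals
\[
\left(\prod_{k=1}^a \frac{1}{\{k\}}\right) [a]!\,\dim_q(a)
\]
times the identity on the spin line. The diagram spinors.141 is obtained from spinors.33 by identifying the top and bottom endpoints of the spin strand, so taking the quantum trace of both sides replaces the identity on the right-hand side with $\Delta$ in view of the first relation of \eqref{tad1}. This already yields
\[
\incg{spinors.141} = \Delta\left(\prod_{k=1}^a \frac{1}{\{k\}}\right) [a]!\,\dim_q(a).
\]

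It remains only to identify $[a]!\,\dim_q(a)$ with the factor $\frac{\{1\}}{\{0\}}\cdot\frac{[2n]!}{[2n-a]!}$ appearing in the statement. By Proposition \ref{dimq},
\[
\dim_q V(\omega_a) = \frac{[2n-2][n]}{[n-1][2n]}\qbinom{2n}{a},
\]
while the definition $\{k\}=zq^{-k}+z^{-1}q^k=[2n-2k]/[n-k]$ gives $\{0\}=[2n]/[n]$ and $\{1\}=[2n-2]/[n-1]$, so $\frac{\{1\}}{\{0\}}=\frac{[2n-2][n]}{[n-1][2n]}$. Combining this with $[a]!\qbinom{2n}{a}=[2n]!/[2n-a]!$ gives $[a]!\,\dim_q(a) = \frac{\{1\}}{\{0\}}\cdot\frac{[2n]!}{[2n-a]!}$, as required.

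Since the Corollary does all the nontrivial work, the only potential obstacle is the diagrammatic identification of spinors.141 as the trace-closure of spinors.33, i.e.\ confirming that the two Clifford vertices of the tadpole become the two trivalent vertices of the theta network upon closing the spin strand. Once that identification is read off from the figures, the remainder of the derivation is purely an arithmetic rearrangement of $q$-integers and presents no genuine difficulty.
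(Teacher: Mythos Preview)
Your proof is correct and follows essentially the same route as the paper's. The paper cites Proposition~\ref{pid} and Proposition~\ref{dimq} directly, closing the $a$-strand of Proposition~\ref{pid} to pick up $\dim_q(a)$; you instead invoke the Corollary to Proposition~\ref{pid} (which has already closed the $a$-strand) and then close the spin strand to pick up $\Delta$, so the two arguments differ only in the order of the two closures, and the final identification $[a]!\,\dim_q(a)=\frac{\{1\}}{\{0\}}\frac{[2n]!}{[2n-a]!}$ via Proposition~\ref{dimq} is the same in both.
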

\begin{proof}
This is an application of Proposition \ref{pid} and the formula for $\dim_q(c)$ in Proposition \ref{dimq}.
\end{proof}

\begin{defn}
 For $r,s,t\ge 0$, the coefficient $X(r,s,t)$ is given by
\begin{equation*} X(r,s,t)=
\frac
{%
 \left( \prod_{k=0}^{r-1}{\{k\}}\right) 
 \left( \prod_{k=0}^{s-1}{\{k\}}\right) 
 \left( \prod_{k=0}^{t-1}{\{k\}}\right) 
}
{%
  \left( \prod_{k=0}^{r+s-1}{\{k\}}\right) 
 \left( \prod_{k=0}^{r+t-1}{\{k\}}\right) 
 \left( \prod_{k=0}^{s+t-1}{\{k\}}\right) 
} 
\end{equation*}
\end{defn}

\begin{prop}\label{threej} Let $(a,b,c)=(r+t,r+s,s=t)$ be admissible.
\begin{equation*}
  \incg{spinors.140}=\Delta X(r,s,t)\frac{[2n]!}{[2n-r-s-t]!} 
\end{equation*}
\end{prop}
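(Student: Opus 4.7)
The plan is to evaluate the $3j$-network by combining the spinor triangulation reduction of Proposition \ref{abc} with the bubble evaluation of Proposition \ref{pid} and the quantum dimension formula of Proposition \ref{dimq}. This mirrors the classical $SU(2)$ theta computation reproduced in \cite[Chapter 11]{MR2418111}.

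First, I would realize the closed network in spinors.140 as two trivalent spinor vertices joined by three projected edges of labels $(a,b,c)=(r+t,r+s,s+t)$ and decompose each edge along the admissible triangulation, so that $r$ strands flow between the $a$- and $b$-channels, $s$ between the $b$- and $c$-channels, and $t$ between the $a$- and $c$-channels. Applying Proposition \ref{abc} to the spinor core of this decomposition contributes the factor
\[
\Delta \left(\prod_{k=1}^{r+s+t}\frac{1}{\{k\}}\right)\frac{[a]!\,[b]!\,[c]!}{[r]!\,[s]!\,[t]!}
\]
times a residual diagram consisting of three closed channels with labels $r,s,t$. Each such channel is a projector-bubble which by Proposition \ref{pid} reduces to $\prod_{k=1}^{m}[k]/\{k\}$ times a single-strand closure, and each single-strand closure then contributes the quantum dimension $\dim_q V(\omega_m)=(\{1\}/\{0\})\qbinom{2n}{m}$ by Proposition \ref{dimq}.

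Finally I would consolidate all factors. Expanding $\qbinom{2n}{m}=[2n]!/([m]!\,[2n-m]!)$ and combining with the factorials $[a]!,[b]!,[c]!,[r]!,[s]!,[t]!$, the $[\cdot]!$-factors collapse into $[2n]!/[2n-r-s-t]!$, while the $\{k\}$-contributions from Proposition \ref{abc}, from each bubble reduction, and from each $\dim_q V(\omega_m)$ reorganize into the ratio $X(r,s,t)$ of the definition. The main obstacle is this last combinatorial bookkeeping: verifying that $\prod_{k=1}^{r+s+t}\{k\}^{-1}$ combines with the $\{k\}$-contributions of the three residual closures to yield precisely the numerator-over-denominator balance $\prod\{r-1,s-1,t-1\text{-products}\}/\prod\{r+s-1,r+t-1,s+t-1\text{-products}\}$ in $X(r,s,t)$, a tedious but straightforward rearrangement relying only on the admissibility identities $a=r+t$, $b=r+s$, $c=s+t$.
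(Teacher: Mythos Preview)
Your approach has a genuine gap in the second step. After applying Proposition~\ref{abc}, the residual diagram is \emph{not} ``three closed channels with labels $r,s,t$''. Proposition~\ref{abc} replaces the spinor loop carrying the three $\Gamma$-vertices by the ordinary strand trivalent vertex with legs $a,b,c$; closing this gives the connected vector theta network (the diagram spinors.40 in the paper), not three independent loops. In particular there is no spinor bubble left on which to invoke Proposition~\ref{pid}, and a product of three quantum dimensions $\dim_q(r)\dim_q(s)\dim_q(t)$ would produce a factor $([2n]!)^3/\big([2n-r]!\,[2n-s]!\,[2n-t]!\big)$ rather than the required $[2n]!/[2n-r-s-t]!$ --- so the ``tedious bookkeeping'' cannot possibly close. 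Indeed the paper only evaluates spinors.40 \emph{after} Proposition~\ref{threej}, precisely by inverting the relation you propose to use; your route is therefore circular as stated.

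The paper proceeds differently: it applies Proposition~\ref{tri} with $(a,b,m)=(s,t,r)$ to peel off the $r$ strands, producing the factor $\prod_{k=0}^{r-1}\{k\}[2n-s-t-k]/(\{s+k\}\{t+k\})$ and reducing spinors.140 to the single-label bubble spinors.141. That bubble is then evaluated by Lemma~\ref{theta} (a direct combination of Proposition~\ref{pid} and Proposition~\ref{dimq}). The essential input you are missing is Proposition~\ref{tri}: it is exactly what converts the connected triangle into a single bubble and supplies the falling product $[2n-s-t]\cdots[2n-r-s-t+1]$ that combines with Lemma~\ref{theta}'s $[2n]!/[2n-s-t]!$ to give $[2n]!/[2n-r-s-t]!$.
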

\begin{proof} This follows from the following application of Proposition \ref{tri}
and Lemma \ref{theta}.
\begin{equation*}
\incg{spinors.140} =
\prod_{k=0}^{r-1}
\frac{\{k\}}{\{s+k\}\{t+k\}}[2n-s-t-k]
\incg{spinors.141} 
\end{equation*}
\end{proof}

\begin{cor} For all $r,s,t\ge 0$, 
 \begin{equation*}
  \incg{spinors.40}=X(r,s,t)\left( \prod_{k=1}^{r+s+t}\{k\}\right)
\frac{[r]![s]![t]!}{[r+s]![r+t]![s+t]!}\frac{[2n]!}{[2n-r-s-t]!}
 \end{equation*}
\end{cor}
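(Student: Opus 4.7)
The plan is to combine Proposition \ref{abc} with Proposition \ref{threej}, viewing them as two different evaluations of the same spinor-decorated theta configuration. Proposition \ref{abc} expresses that configuration (spinors.34, the spinor loop threaded through a trivalent vertex with admissible labels $(a,b,c)=(r+t,r+s,s+t)$) as an explicit scalar times the uncolored theta network (spinors.35, which I take to be the diagram spinors.40 appearing in the corollary). Proposition \ref{threej}, on the other hand, evaluates the same spinor-decorated theta directly as $\Delta\, X(r,s,t)\,[2n]!/[2n-r-s-t]!$. So the entire argument amounts to rearranging the identity
\begin{equation*}
\Delta\!\left(\prod_{k=1}^{r+s+t}\frac{1}{\{k\}}\right)\!\frac{[a]![b]![c]!}{[r]![s]![t]!}\,\theta(a,b,c)=\Delta\, X(r,s,t)\frac{[2n]!}{[2n-r-s-t]!},
\end{equation*}
where $\theta(a,b,c)$ denotes the theta network in spinors.40.

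First I would check that spinors.34 (appearing in Proposition \ref{abc}) and spinors.140 (evaluated in Proposition \ref{threej}) really represent the same morphism, i.e.\ the spinor loop capped off onto the three external strands of the trivalent vertex labeled $(a,b,c)$. Assuming this diagrammatic identification is transparent from the figures, the rest is algebra: I would cancel the common factor $\Delta$, move $\prod_{k}\tfrac{1}{\{k\}}$ and $\tfrac{[a]![b]![c]!}{[r]![s]![t]!}$ to the right-hand side as their reciprocals, and substitute $[a]!=[r+t]!$, $[b]!=[r+s]!$, $[c]!=[s+t]!$. This yields exactly the claimed formula.

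The only real step that requires care is the diagrammatic matching, since the formula collapses immediately once it is accepted that the diagram evaluated via Proposition \ref{abc} coincides with the one evaluated via Proposition \ref{threej}. If instead the two diagrams differ by a normalization (for instance an extra spinor loop contributing a factor of $\Delta$ or $\{k\}$), one would need to compensate using either Proposition \ref{pid} or Lemma \ref{theta}; but inspection of the right-hand side (in particular the presence of $\prod_{k=1}^{r+s+t}\{k\}$ exactly inverting the denominator of Proposition \ref{abc}, and the appearance of $X(r,s,t)[2n]!/[2n-r-s-t]!$ matching Proposition \ref{threej} verbatim) makes clear that no further normalization is hiding. Hence the proof is simply the chain of equalities obtained by substituting the two expressions and solving for $\theta(a,b,c)$.
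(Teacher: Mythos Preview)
Your proposal is correct and matches the paper's own proof essentially line for line: the paper rearranges Proposition~\ref{abc} (identifying spinors.34 with spinors.140 and spinors.35 with spinors.40) to write the theta network spinors.40 as $\frac{1}{\Delta}\left(\prod_{k=1}^{r+s+t}\{k\}\right)\frac{[r]![s]![t]!}{[a]![b]![c]!}$ times spinors.140, and then substitutes the value of spinors.140 from Proposition~\ref{threej}. Your diagrammatic identifications and the resulting algebra are exactly what the paper does, and no extra normalization is needed.
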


\begin{proof} This follows from the following application of Proposition \ref{abc}.
\begin{equation*}
\incg{spinors.40}=
\frac{1}{\Delta}\left( \prod_{k=1}^{r+s+t} \{k\}\right) 
\frac{[r]![s]![t]!}{[a]![b]![c]!}
\incg{spinors.140} 
\end{equation*}
Now apply Proposition \ref{threej}.
\end{proof}

\begin{cor} For all $r,s,t\ge 0$, 
 \begin{equation*}
  \incg{spinors.153}=\Delta^2\left( \prod_{k=0}^{r+s+t-1}\frac1{\{k\}}\right) 
X(r,s,t) \frac{[a]![b]![c]!}{[r]![s]![t]!}\frac{[2n]!}{[2n-r-s-t]!}
 \end{equation*}
\end{cor}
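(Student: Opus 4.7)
The plan is to deduce the claim directly from the preceding corollary by applying Proposition~\ref{abc} at each of the two trivalent vertices of the theta-type network depicted by spinors.153. The diagram spinors.153 should be understood as the same theta network as spinors.40 but with each admissible-triple vertex presented in the $\gamma$-matrix/spinor-chain form rather than as a plain $(a,b,c)$-trivalent vertex. Since Proposition~\ref{abc} gives precisely the local conversion between these two presentations, two applications — one at each vertex — will express spinors.153 as a scalar multiple of spinors.40, to which the previous corollary applies.

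Concretely, first apply Proposition~\ref{abc} at one of the two vertices; this multiplies the evaluation by
\[
\Delta \left(\prod_{k=1}^{r+s+t}\frac{1}{\{k\}}\right)\frac{[a]![b]![c]!}{[r]![s]![t]!}.
\]
Next, apply the same identity at the other vertex, picking up the same factor a second time. Then substitute the closed-form value of spinors.40 from the previous corollary and collect factors. The two copies of $\Delta$ combine to $\Delta^{2}$; one copy of $[a]![b]![c]!/[r]![s]![t]!$ cancels against the reciprocal ratio in the evaluation of spinors.40; the double product $\prod (1/\{k\})^{2}$ combined with the single product $\prod \{k\}$ from spinors.40 collapses to one product $\prod 1/\{k\}$; and the $X(r,s,t)\cdot [2n]!/[2n-r-s-t]!$ passes through unchanged, producing the right-hand side of the claim.

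The main obstacle will be pinning down the precise range of the resulting $\{k\}$-product. A naive double application of Proposition~\ref{abc} produces the product over $k=1,\ldots,r+s+t$, whereas the claim states the range $k=0,\ldots,r+s+t-1$. Reconciling this apparent shift requires a careful accounting of the $\{1\}/\{0\}$ correction that enters in Lemma~\ref{theta} and propagates through the derivation of the previous corollary, together with the normalisation convention in the definition of $X(r,s,t)$. Once this index bookkeeping is handled, every other step is an immediate algebraic simplification using identities already recorded earlier in the paper.
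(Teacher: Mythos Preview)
Your approach is essentially the paper's. The paper applies Proposition~\ref{abc} once to pass from the diagram in question to the one evaluated in Proposition~\ref{threej} (spinors.140) and then invokes that proposition; you instead apply Proposition~\ref{abc} twice to reach spinors.40 and then invoke the preceding corollary. Since that corollary was itself obtained from Proposition~\ref{threej} via one inverse application of Proposition~\ref{abc}, your extra step and the corollary lookup cancel, and the two routes are algebraically identical.

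Your remark about the product range is accurate: a direct application of Proposition~\ref{abc} gives $\prod_{k=1}^{r+s+t} 1/\{k\}$, not $\prod_{k=0}^{r+s+t-1} 1/\{k\}$. The paper's own displayed intermediate step shows the shifted range $k=0,\dots,r+s+t-1$ (and in fact omits the factor $1/\{k\}$ entirely), so the discrepancy you flag is an artifact of the paper's statement rather than a defect in your argument; no hidden $\{1\}/\{0\}$ correction is needed.
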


\begin{proof} This follows from the following application of Proposition \ref{abc}.
\begin{equation*}
\incg{spinors.153}=
\Delta \left( \prod_{k=0}^{r+s+t-1} \right) \frac{[a]![b]![c]!}{[r]![s]![t]!}
\incg{spinors.140} 
\end{equation*}
\end{proof}

\begin{defn} The tetrahedron symbols are the following labelled strand networks.
\[ \incg[1in]{spinors.154} \]
\end{defn}
The label $i,j$ on a strand will be denoted $z_{ij}$. Then for $1\le i\le 3$ and $1\le j\le 4$
we put
\[ r_i = \sum_{j=1}^4 z_{ij} \qquad c_j = \sum_{i=1}^3 z_{ij} \]
Then we have $\sum_{i=1}^3 r_i=\sum_{j=1}^4c_j$.

\subsection{Fierz coefficients}
In this section we discuss the $q$-analogue of the Fierz coefficients.
The coefficients $F(a,b)$ are defined
by any of the following equivalent definitions. These satisfy $F(a,b)=F(b,a)$.

\begin{defn}
\begin{align}
 \incg{spinors.128}&=\Delta F(a,b) \\
\incg{spinors.129}&=\phi(b)F(a,b)\incg{spinors.130} \\
\incg{spinors.131}&=\sum_b \frac{F(a,b)}{\dim_q(b)}\incg{spinors.132}
\end{align}
\end{defn}

These coefficients can be determined using the method in \cite[\S 11.3]{MR2418111}.
The starting point is the observation:
\begin{equation*}
 \incg{spinors.128}=\incg{spinors.133}
\end{equation*}
Then we apply completeness to obtain:
\begin{equation*}
 \incg{spinors.133}=\sum_{m=0}^{\min (a,b)}C(m)\incg[1in]{spinors.134}
\end{equation*}
where from \eqref{cpl}, $C(m)$ is given by
\begin{equation*}
  C(m)=\left(\prod_{k=a+b-2m+1}^{a+b-m} \frac{1}{\{k\}}
\right) \qbinom{a}{m} \qbinom{b}{m} [m]!
 \end{equation*}
Then applying Lemma \ref{twist} gives
\begin{equation*}
 \incg[.9in]{spinors.134}=(-1)^{ab-m^2}z^{-2m}q^{ab-2(a-m)(b-m)}
\incg[.9in]{spinors.142}
\end{equation*}
Then from the proof of Proposition \ref{threej} we have
\begin{equation*}
\incg[1in]{spinors.142}=\Delta X(a-m,b-m,m) \frac{[2n]!}{[2n-a-b+m]!}
\end{equation*}

This gives an expression for $F(a,b)$.
It is not clear from this expression that these satisfy $F(a,b)=F(b,a)$
or that $F(a,b)$ is invariant under the involution $q,z\leftrightarrow q^{-1},z^{-1}$.

The coefficients $F(a,0)$ for $a\ge 0$ are determined by Proposition \ref{pid}. This gives
\begin{equation}
 F(a,0)=\left( \prod_{k=0}^{a-1}\frac{1}{\{n-k\}}\right) 
\end{equation} 

Next we calculate the coefficients $F(a,1)$ for $a\ge 1$ explicitly.
\begin{lemma}
\begin{equation*}
F(a,1)=(-1)^a\left( \prod_{k=0}^{a}\frac1{\{k\}}\right) [2n-2a]\frac{[2n]!}{[2n-a]!} 
\end{equation*}
\end{lemma}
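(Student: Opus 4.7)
The plan is to specialise the general expression for $F(a,b)$ derived in the paragraphs immediately preceding this lemma to the case $b=1$. When $b=1$ the completeness sum contains only the two terms $m=0$ and $m=1$, so the whole calculation reduces to tabulating these two contributions and combining them.

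First I would record the ingredients at $b=1$. From \eqref{cpl} one has $C(0)=1$ and $C(1)=[a]/\{a\}$. The $X$-factors telescope to
\[
X(a,1,0)=\prod_{k=0}^{a}\frac{1}{\{k\}},\qquad X(a-1,0,1)=\prod_{k=0}^{a-1}\frac{1}{\{k\}},
\]
and the twist prefactor $(-1)^{ab-m^{2}}z^{-2m}q^{ab-2(a-m)(b-m)}$ from Lemma \ref{twist} specialises to $(-1)^{a}q^{-a}$ at $m=0$ and to $(-1)^{a-1}z^{-2}q^{a}$ at $m=1$.

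Substituting these into the formula for $F(a,1)$, using $X(a-1,0,1)=\{a\}\cdot X(a,1,0)$ to bring both terms over a common denominator, and pulling out the common prefactor $(-1)^{a}[2n]!\big/\bigl([2n-a]!\,\prod_{k=0}^{a}\{k\}\bigr)$, the statement reduces to the single $q$-integer identity
\[
q^{-a}[2n-a]-z^{-2}q^{a}[a]=[2n-2a].
\]
I would verify this identity directly from the defining formulas for the extended $q$-integers (applying the substitutions $a\mapsto -a$ and $a\mapsto -2a$ in the definition of $[2n+a]$) together with the relation $(q-q^{-1})\delta=z-z^{-1}$; equivalently it follows from a single application of the three-term identity $[A+B][C]=[A][B+C]+[B][A-C]$ recorded in the preamble, for a suitable choice of $A,B,C$.

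The main obstacle is really only bookkeeping: tracking the signs from $(-1)^{ab-m^{2}}$, the powers of $z$ coming from the twist, and the one-step offset between the $X$-products and the $\{k\}$-products in the final answer. Once these cancellations are correctly aligned, the only genuinely algebraic step is the short $q$-integer identity displayed above, and the stated closed form for $F(a,1)$ falls out.
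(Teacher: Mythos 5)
Your proposal is correct and is essentially the paper's own argument: the paper also reduces the computation to the two-term ($m=0,1$) completeness expansion available when $b=1$, evaluates the two resulting closed diagrams via the twist lemma and the tetrahedron evaluation of Proposition \ref{threej} (obtaining exactly your coefficients, just indexed at $a+1$), and arrives at the same combination $q^{-a-1}[2n-a-1]-z^{-2}q^{a+1}[a+1]$, leaving the final simplification implicit. One small caution: check the closing identity directly from the definition of the extended integers, as you first propose -- it is the splitting $q^{-a}[2n-a]=[2n-2a]+z^{-2}q^{a}[a]$, with $z^{2}$ playing the role of $q^{2n}$ -- rather than relying on the side remark about $[A+B][C]=[A][B+C]+[B][A-C]$, which does not literally produce the explicit powers $q^{\pm a}$, $z^{-2}$ in a single application.
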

\begin{proof}
The first step is
\begin{equation*}
\incg[.75in]{spinors.147}=
\incg[.75in]{spinors.148}
+\frac{1}{\{a+1\}}[a+1]
\incg[.75in]{spinors.149}
\end{equation*}
Then we have 
\begin{equation*}
\incg[1in]{spinors.148}=\Delta (-1)^{a+1}q^{-a-1}
\left( \prod_{k=0}^{a+1}\frac1{\{k\}}\right) \frac{[2n]!}{[2n-a-2]!}
\end{equation*}
\begin{equation*}
\incg[1in]{spinors.149}=\Delta (-1)^{a}z^{-2}q^{a+1}\left( \prod_{k=0}^{a}\frac1{\{k\}}\right)\frac{[2n]!}{[2n-a-1]!}
\end{equation*}
Hence
\begin{align*}
 F(a+1,1)&=(-1)^{a+1}q^{-a-1}\left( \prod_{k=0}^{a+1}\frac1{\{k\}}\right) \frac{[2n]!}{[2n-a-2]!} \\
&\qquad +\frac{1}{\{a+1\}}[a+1](-1)^{a}z^{-2}q^{a+1}\left( \prod_{k=0}^{a}\frac1{\{k\}}\right) \frac{[2n]!}{[2n-a-1]!} \\
&=(-1)^{a+1}\left( \prod_{k=0}^{a+1}\frac1{\{k\}}\right) \frac{[2n]!}{[2n-a-1]!} \\
&\qquad \left( q^{-a-1}[2n-a-1]-z^{-2}q^{a+1}[a+1]\right)
\end{align*}
\end{proof}

Next we find a recurrence relation for these coefficients.
\begin{lemma} For all $a,b,c\ge 0$,
\begin{equation*}
 \incg[.5in]{spinors.143}=\Delta \left( \prod_{k=0}^{c-1}\frac{\{k\}}{[2n-k]}\right) F(a,c)F(b,c)
\end{equation*}
\end{lemma}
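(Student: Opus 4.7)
The diagram on the left-hand side is a closed network consisting of two spinor loops joined through a single solid edge labelled $c$; one loop threads an interior edge labelled $a$ and the other an edge labelled $b$. My plan is to dissolve the two spinor loops one at a time using the Fierz expansion, and then evaluate the closed solid network that remains.

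First, I would apply the third defining identity for the Fierz coefficients,
\[ \incg{spinors.131} = \sum_{b'} \frac{F(a,b')}{\dim_q(b')}\incg{spinors.132}, \]
to the spinor loop that threads the $a$-edge. Because that loop is attached to the rest of the network only through the $c$-edge, the sum over $b'$ collapses to the single term $b'=c$ by orthogonality of irreducible projectors (a consequence of Proposition \ref{threej} together with the admissibility constraint at each trivalent vertex). This contributes the scalar $F(a,c)/\dim_q(c)$ and replaces the left spinor loop by a $c$-coloured arc. Repeating the identical reduction on the right spinor loop produces a factor $F(b,c)/\dim_q(c)$, and leaves a closed solid network consisting of a $c$-loop carrying its idempotent.

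The residual closed diagram is evaluated via Proposition \ref{pid} (which eliminates the $c$-projector at the cost of $\Delta\prod_{k=1}^{c}[k]/\{k\}$) combined with Proposition \ref{dimq} for the quantum dimension $\dim_q(c)$. The remaining arithmetic is to show that
\[ \frac{F(a,c)F(b,c)}{\dim_q(c)^2}\cdot(\text{closed-loop value}) \;=\; \Delta\prod_{k=0}^{c-1}\frac{\{k\}}{[2n-k]}\,F(a,c)F(b,c). \]
Using $\dim_q(c)=(\{1\}/\{0\})\qbinom{2n}{c}$ and $\prod_{k=0}^{c-1}[2n-k]=[2n]!/[2n-c]!$, the $q$-factorials coming from the binomial coefficient and from Proposition \ref{pid} cancel against the factor $[2n]!/[2n-c]!$ appearing in the theta-evaluation of Lemma \ref{theta}, leaving exactly the advertised product.

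The main obstacle is the orthogonality step: one must verify that the spinor loop really does factor as an endomorphism supported on the $c$-channel alone, i.e.\ that the only fusion-theoretic coupling between the two halves of the diagram is through the $c$-edge. This rests on the fact that a $(c,b',c)$ coupling inserted into the network with $b'\neq c$ vanishes after tracing against the $c$-projector, which is the "theta is diagonal" principle underlying Proposition \ref{threej}. Once that is in hand, the rest of the argument is a mechanical bookkeeping of scalar factors in $K$.
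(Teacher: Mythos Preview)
Your overall strategy---reduce each spinor loop to a scalar on the $c$-channel and then close up---is exactly what the paper does, and the paper's proof is indeed just the one line ``straightforward calculation from the definition of $F(a,b)$''. However, you are taking the long way around. The second defining equation,
\[
\incg{spinors.129}=\phi(c)\,F(a,c)\,\incg{spinors.130},
\]
already \emph{is} the statement that the spinor bubble threaded by an $a$-edge acts as a scalar on the $c$-line. Applying it once to the $a$-bubble and once to the $b$-bubble, then tracing over the remaining $c$-line, gives the result in two steps with no sum to collapse. Your route via the third (expansion) definition followed by an orthogonality argument is logically equivalent---the orthogonality you worry about is precisely the content of the second definition---but it is not an ``obstacle'' to be overcome; it is already packaged in the list of equivalent definitions.

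There is also a bookkeeping inconsistency in your sketch: after you say both spinor loops have been removed and only a solid $c$-loop remains, you then invoke Proposition~\ref{pid}, which is a statement about a diagram that still contains a spinor loop. If both spinor bubbles have genuinely been replaced by scalars, the residual diagram is simply $\dim_q(c)$ and Proposition~\ref{pid} plays no role; the remaining factor comes from identifying $\phi(c)$ via the compatibility between the first and second defining equations. So the argument is right in outline, but tighten the accounting and use the second definition directly rather than expanding and recollapsing.
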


\begin{proof} This is a straightforward calculation from the definition of $F(a,b)$.
\end{proof}

Then by Proposition
\begin{equation*}
 \incg[.5in]{spinors.144}=\incg[.5in]{spinors.145}
-\frac{[a+1][2n-a]}{\{a+1\}\{a\}}
\incg[.5in]{spinors.146}
\end{equation*}
This then gives the recurrence relation
\begin{equation*}
 F(a+2,b)=\frac{[2n-b]}{\{b\}}F(a+1,b)
-\frac{[a+1][2n-a]}{\{a+1\}\{a\}}F(a,b)
\end{equation*}

\def\cprime{$'$}


\begin{thebibliography}{BdVV81}

\bibitem[Aig97]{MR1428870}
Martin Aigner.
\newblock The {P}enrose polynomial of a plane graph.
\newblock {\em Math. Ann.}, 307(2):173--189, 1997.

\bibitem[BdVV81]{MR629943}
O.~Babelon, H.~J. de~Vega, and C.-M. Viallet.
\newblock Solutions of the factorization equations from {T}oda field theory.
\newblock {\em Nuclear Phys. B}, 190(3, FS 3):542--552, 1981.

\bibitem[BW35]{MR1507084}
Richard Brauer and Hermann Weyl.
\newblock Spinors in {$n$} {D}imensions.
\newblock {\em Amer. J. Math.}, 57(2):425--449, 1935.

\bibitem[CFS95]{MR1366832}
J.~Scott Carter, Daniel~E. Flath, and Masahico Saito.
\newblock {\em The classical and quantum 6{$j$}-symbols}, volume~43 of {\em
  Mathematical Notes}.
\newblock Princeton University Press, Princeton, NJ, 1995.

\bibitem[CK82]{MR669170}
Predrag Cvitanovi{\'c} and A.~D. Kennedy.
\newblock Spinors in negative dimensions.
\newblock {\em Phys. Scripta}, 26(1):5--14, 1982.

\bibitem[Cvi08]{MR2418111}
Predrag Cvitanovi{\'c}.
\newblock {\em Group theory}.
\newblock Princeton University Press, Princeton, NJ, 2008.
\newblock Birdtracks, Lie's, and exceptional groups.

\bibitem[DF94]{MR1327533}
Jin~Tai Ding and Igor~B. Frenkel.
\newblock Spinor and oscillator representations of quantum groups.
\newblock In {\em Lie theory and geometry}, volume 123 of {\em Progr. Math.},
  pages 127--165. Birkh\"auser Boston, Boston, MA, 1994.

\bibitem[Din99]{MR1673976}
Jintai Ding.
\newblock Spinor representations of {$U\sb q(\widehat{\mathfrak{gl}}(n))$} and
  quantum boson-fermion correspondence.
\newblock {\em Comm. Math. Phys.}, 200(2):399--420, 1999.

\bibitem[Dri89]{MR1025154}
V.~G. Drinfel{\cprime}d.
\newblock Almost cocommutative {H}opf algebras.
\newblock {\em Algebra i Analiz}, 1(2):30--46, 1989.

\bibitem[FG95]{MR1312974}
S.~Fishel and I.~Grojnowski.
\newblock Canonical bases for the {B}rauer centralizer algebra.
\newblock {\em Math. Res. Lett.}, 2(1):15--26, 1995.

\bibitem[FK97]{MR1446615}
Igor~B. Frenkel and Mikhail~G. Khovanov.
\newblock Canonical bases in tensor products and graphical calculus for {$U\sb
  q(\mathfrak{sl}\sb 2)$}.
\newblock {\em Duke Math. J.}, 87(3):409--480, 1997.

\bibitem[Hay90]{MR1036118}
Takahiro Hayashi.
\newblock {$q$}-analogues of {C}lifford and {W}eyl algebras---spinor and
  oscillator representations of quantum enveloping algebras.
\newblock {\em Comm. Math. Phys.}, 127(1):129--144, 1990.

\bibitem[HK02]{MR1881971}
Jin Hong and Seok-Jin Kang.
\newblock {\em Introduction to quantum groups and crystal bases}, volume~42 of
  {\em Graduate Studies in Mathematics}.
\newblock American Mathematical Society, Providence, RI, 2002.

\bibitem[HS99]{MR1751618}
Istv{\'a}n Heckenberger and Axel Sch{\"u}ler.
\newblock Symmetrizer and antisymmetrizer of the {B}irman-{W}enzl-{M}urakami
  algebras.
\newblock {\em Lett. Math. Phys.}, 50(1):45--51, 1999.

\bibitem[HS00]{MR1851791}
I.~Heckenberger and A.~Sch{\"u}ler.
\newblock On {FRT}-{C}lifford algebras.
\newblock {\em Adv. Appl. Clifford Algebras}, 10(2):267--296 (2001), 2000.

\bibitem[Jim86]{MR824090}
Michio Jimbo.
\newblock Quantum {$R$} matrix for the generalized {T}oda system.
\newblock {\em Comm. Math. Phys.}, 102(4):537--547, 1986.

\bibitem[Jon87]{MR908150}
V.~F.~R. Jones.
\newblock Hecke algebra representations of braid groups and link polynomials.
\newblock {\em Ann. of Math. (2)}, 126(2):335--388, 1987.

\bibitem[Kac85]{MR823672}
Victor~G. Kac.
\newblock {\em Infinite-dimensional {L}ie algebras}.
\newblock Cambridge University Press, Cambridge, second edition, 1985.

\bibitem[Ken81]{MR626119}
A.~D. Kennedy.
\newblock Clifford algebras in {$2\omega $} dimensions.
\newblock {\em J. Math. Phys.}, 22(7):1330--1337, 1981.

\bibitem[Ken82]{MR677640}
A.~D. Kennedy.
\newblock Diagrammatic methods for spinors in {F}eynman diagrams.
\newblock {\em Phys. Rev. D (3)}, 26(8):1936--1955, 1982.

\bibitem[KL94]{MR1280463}
Louis~H. Kauffman and S{\'o}stenes~L. Lins.
\newblock {\em Temperley-{L}ieb recoupling theory and invariants of
  {$3$}-manifolds}, volume 134 of {\em Annals of Mathematics Studies}.
\newblock Princeton University Press, Princeton, NJ, 1994.

\bibitem[Koi97]{MR1434112}
Kazuhiko Koike.
\newblock Principal specializations of the classical groups and {$q$}-analogs
  of the dimension formulas.
\newblock {\em Adv. Math.}, 125(2):236--274, 1997.

\bibitem[LR97]{MR1427801}
Robert Leduc and Arun Ram.
\newblock A ribbon {H}opf algebra approach to the irreducible representations
  of centralizer algebras: the {B}rauer, {B}irman-{W}enzl, and type {A}
  {I}wahori-{H}ecke algebras.
\newblock {\em Adv. Math.}, 125(1):1--94, 1997.

\bibitem[Mor07]{scottthesis}
Scott Morrison.
\newblock A diagrammatic category for the representation theory of $u_q(sl_n)$.
\newblock 2007, arXiv:0704.1503.

\bibitem[MOY98]{MR1659228}
Hitoshi Murakami, Tomotada Ohtsuki, and Shuji Yamada.
\newblock Homfly polynomial via an invariant of colored plane graphs.
\newblock {\em Enseign. Math. (2)}, 44(3-4):325--360, 1998.

\bibitem[Oka90]{MR1086739}
Masato Okado.
\newblock Quantum {$R$} matrices related to the spin representations of {$B\sb
  n$} and {$D\sb n$}.
\newblock {\em Comm. Math. Phys.}, 134(3):467--486, 1990.

\bibitem[OW02]{MR1929189}
R.~C. Orellana and H.~G. Wenzl.
\newblock {$q$}-centralizer algebras for spin groups.
\newblock {\em J. Algebra}, 253(2):237--275, 2002.

\bibitem[Pen71]{MR0281657}
Roger Penrose.
\newblock Applications of negative dimensional tensors.
\newblock In {\em Combinatorial {M}athematics and its {A}pplications ({P}roc.
  {C}onf., {O}xford, 1969)}, pages 221--244. Academic Press, London, 1971.

\bibitem[RTF89]{MR1015339}
N.~Yu. Reshetikhin, L.~A. Takhtadzhyan, and L.~D. Faddeev.
\newblock Quantization of {L}ie groups and {L}ie algebras.
\newblock {\em Algebra i Analiz}, 1(1):178--206, 1989.

\bibitem[Wen90]{MR1090432}
Hans Wenzl.
\newblock Quantum groups and subfactors of type {$B$}, {$C$}, and {$D$}.
\newblock {\em Comm. Math. Phys.}, 133(2):383--432, 1990.

\bibitem[Wes98]{MR1634471}
Bruce~W. Westbury.
\newblock A generating function for spin network evaluations.
\newblock In {\em Knot theory ({W}arsaw, 1995)}, volume~42 of {\em Banach
  Center Publ.}, pages 447--456. Polish Acad. Sci., Warsaw, 1998.

\end{thebibliography}
\end{document}